\definecolor{DarkBlue}{rgb}{0,0.2,0.6}
\definecolor{PinkPurple}{rgb}{0.8,0.3,0.3}
\definecolor{darkgreen}{rgb}{.1,.5,0}
\definecolor{brown}{rgb}{.4,.2,.1}
\newtheorem{theorem}{Theorem}[section]
\newtheorem{lemma}[theorem]{Lemma}
\newproof{proof}{Proof}
\newtheorem{proposition}[theorem]{Proposition}
\newtheorem{corollary}[theorem]{Corollary}
\newtheorem{definition}[theorem]{Definition}
\newtheorem{problem}[theorem]{Problem}
\newtheorem{question}[theorem]{Question}
\newtheorem{example}[theorem]{Example}
\newtheorem{remark}[theorem]{Remark}
\numberwithin{equation}{section}
\def\ker{\operatorname{ker}}
\def\max{\operatorname{max}}
\newcommand\ddfrac[2]{\frac{\displaystyle #1}{\displaystyle #2}}
\numberwithin{equation}{section}
\begin{document}


\begin{frontmatter}

\title{Conditional positive definiteness as a bridge \\ between $k$--hyponormality and $n$--contractivity}
\author{Chafiq Benhida}
\address{UFR de Math\'{e}matiques, Universit\'{e} des Sciences et
Technologies de Lille, F-59655 \newline Villeneuve-d'Ascq Cedex, France}
\ead{chafiq.benhida@univ-lille.fr}

\author{Ra\'ul E. Curto}
\address{Department of Mathematics, University of Iowa, Iowa City, Iowa 52242-1419, USA}
\ead{raul-curto@uiowa.edu}
\ead[url]{http://www.math.uiowa.edu/\symbol{126}rcurto/}

\author{George R. Exner}
\address{Department of Mathematics, Bucknell University, Lewisburg, Pennsylvania 17837, USA}
\ead{exner@bucknell.edu}

\begin{abstract}
For sequences $\alpha \equiv \{\alpha_n\}_{n=0}^{\infty}$ of positive real numbers, called weights, we study the weighted shift operators $W_{\alpha}$ having the property of moment infinite divisibility ($\mathcal{MID}$); that is, for any $p > 0$, the Schur power $W_{\alpha}^p$ is subnormal. \ We first prove that $W_{\alpha}$ is $\mathcal{MID}$ if and only if certain infinite matrices $\log M_{\gamma}(0)$ and $\log M_{\gamma}(1)$ are conditionally positive definite (CPD). \ Here $\gamma$ is the sequence of moments associated with $\alpha$, $M_{\gamma}(0),M_{\gamma}(1)$ are the canonical Hankel matrices whose positive semi-definiteness determines the subnormality of $W_{\alpha}$, and $\log$ is calculated entry-wise (i.e., in the sense of Schur or Hadamard). \ Next, we use conditional positive definiteness to establish a new bridge between $k$--hyponormality and $n$--contractivity, which sheds significant new light on how the two well known staircases from hyponormality to subnormality interact. \ As a consequence, we prove that a contractive weighted shift $W_{\alpha}$ is $\mathcal{MID}$ if and only if for all $p>0$, $M_{\gamma}^p(0)$ and 
$M_{\gamma}^p(1)$ are CPD. \ 
\end{abstract}

\begin{keyword}
Weighted shift, Subnormal, Moment infinitely divisible, Conditionally positive definite, Completely monotone

\textit{2010 Mathematics Subject Classification} \ Primary 47B20, 47B37; Secondary 44A60.

\end{keyword}

\end{frontmatter}

\tableofcontents

\setcounter{tocdepth}{2}


\section{Introduction and Statement of Main Results} \label{Intro}

Let $\mathcal{H}$ denote a separable, complex Hilbert space and $\mathcal{L}(\mathcal{H})$ be the algebra of bounded linear operators on $\mathcal{H}$. \  Recall that an operator $T$ is \textit{subnormal} if it is the restriction to a (closed) invariant subspace of a normal operator, and \textit{hyponormal} if $T^* T \geq T T^*$. \ A unilateral weighted shift $W_{\alpha}$ acting on the classical sequence space $\ell^2(\mathbb{N}_0)$ is called \textit{moment infinitely divisible} (in symbols, $W_{\alpha} \in \mathcal{MID}$) if all Schur powers $W_{\alpha}^p$ are subnormal. \ Thus, the class $\mathcal{MID}$ consists of all subnormal weighted shifts $W_{\alpha}$ with moment sequence $\gamma$ such that $\gamma^p$ is interpolated by a positive Borel probability measure $\mu^{(p)}$, for every $p>0$; $\mu^{(p)}$ is the so-called \textit{Berger measure} of the subnormal weighted shift $W_{\alpha}^p$. \ This is equivalent to the (Schur) infinite divisibility of the two Hankel \textit{moment matrices} $M_{\gamma}(0):=\left(\gamma_{i+j}\right)_{i,j=0}^{\infty}$ and $M_{\gamma}(1):=\left(\gamma_{i+j+1}\right)_{i,j=0}^{\infty}$, where $\gamma$ is the sequence of moments associated with the weight sequence $\alpha$. \ Since all entries in these two matrices are positive, and since the matrices are Hermitian, their infinite divisibility is equivalent to the \textit{conditional positive definiteness} of their (Schur) logarithms $\log M_{\gamma}(0)$ and $\log M_{\gamma}(1)$. 

As a consequence, we can prove that $W_{\alpha}$ is $\mathcal{MID}$ if and only if both $\log M_{\gamma}(0)$ and $\log M_{\gamma}(1)$ are CPD. \ This leads to a new characterization of $W_{\alpha} \in \mathcal{MID}$ in terms of the sequence $\delta_n:=\log \left( \ddfrac{\gamma_n \gamma_{n+2}}{\gamma_{n+1}^2}\right) \quad (n \ge 0)$. \ 

It is well known that subnormality is a much stronger condition than hyponormality. \ For a contractive weighted shift $W_{\alpha}$, the former requires a Berger measure $\mu$ supported in $[0,1]$ that interpolates $\gamma$; the latter requires $\alpha_0 \le \alpha_1 \le \alpha_2 \le \ldots$. \ There are two well-known staircases connecting hyponormality and subnormality, and they correspond to two well-known tests for subnormality. \ The first test involves the Bram-Halmos criterion for subnormality, which requires that $W_{\alpha}$ be $k$--hyponormal for every $k \ge 1$; the second test has to do with the Agler-Embry  approach to subnormality, which requires the $n$--contractivity of $W_{\alpha}$ for every $n \ge 1$. \ 

In this paper we consider the role of conditional positive definiteness in establishing a bridge between the two above mentioned staircases. \ Concretely, as a first step we will see that $k$--hyponormality implies $2k$--contractivity. 

In \cite{EJP}, G.R. Exner, I.B. Jung and S.S. Park proved for general operators that $k$--hyponormality implies $2k$--contractivity. \ Implicit in their work was a significant identity, which we use as a point of departure. \ Given a moment sequence $\gamma$, a positive integer $k$, a nonnegative integer $\ell$, and the $(k+1) \times(k+1)$ (Hankel) compressed moment submatrix $M_{\gamma}(\ell,k):=\left( \gamma_{\ell+i+j} \right)_{i,j=0}^{k}$, we consider the expression
\begin{equation} \label{identity1}
Q_{\gamma}(\ell,k):=\bm{v}^{\ast} M_{\gamma}(\ell,k)\bm{v},
\end{equation}
where $\bm{v}$ is the column vector of length $k+1$ with $i$--th coordinate $(-1)^i \binom{k}{i}$; that is, $\bm{v}:=(1, -\binom{k}{1},
\binom{k}{2},-\binom{k}{3}, \ldots, -\binom{k}{k-1}, 1)$. \ Observe that $\sum_{i=0}^{k} v_i=(1-1)^{k}=0$. \ Experimental calculations using {\it Mathematica} \cite{Wol} easily reveal that (\ref{identity1}) becomes
\begin{eqnarray} \label{identity2}
Q_{\gamma}(\ell,k)&=&\gamma_{\ell}-\binom{2k}{1} \gamma_{\ell+1}+\binom{2k}{2} \gamma_{\ell+2}-\binom{2k}{3} \gamma_{\ell+3} + \ldots \nonumber \\
&&-\binom{2k}{2k-1}\gamma_{\ell+2k-1}+\gamma_{\ell+2k} \nonumber \\
&&= \sum_{i=0}^{2k} (-1)^i \binom{2k}{i}\gamma_{\ell+i}.
\end{eqnarray}
Once the proposed form of $Q_{\gamma}(\ell,k)$ is discovered, a proof of (\ref{identity2}) by induction is straightforward. 

Positivity of the expression on the right-hand side of (\ref{identity2}) for all $\ell$ is exactly what we need for $2k$--contractivity of $W_{\alpha}$. \ On the other hand, (\ref{identity1}) is one of the expressions used in the determination of CPD for the matrix $M_{\gamma}(\ell,k)$. \ Concretely, if $W_{\alpha}$ is $k$--hyponormal, then $M_{\gamma}(\ell,k) \ge 0$ for all $\ell \ge 0$. \ Then $M_{\gamma}(\ell,k)$ is CPD for all $\ell \ge 0$, and therefore $Q_{\gamma}(\ell,k) \ge 0$ for all $\ell \ge 0$. \ It follows that $\sum_{i=0}^{2k} (-1)^i \binom{2k}{i}\gamma_{\ell+i} \ge 0$ for all $\ell \ge 0$, and this means that $W_{\alpha}$ is $2k$--contractive. \ This is a special case of a more general result, involving $(k,2m)$--CPD matrices, which represents a very useful version of CPD, appropriately localized to keep track of the size of the matrix and the initial moment. 

The above calculation is part of a much broader theoretical setting, involving properties of sequences such as complete monotonicity and hypercontractivity, along with their $\log$ analogs, e.g., $\log$ complete monotonicity. \ We discuss aspects of this theory in Section \ref{main}.

We are now ready to state the main results of this paper. \ The proofs will be given in Section \ref{main}. \ For notation and terminology, we refer the reader to Section \ref{prelim}, where we also briefly review some topics from unilateral weighted shifts and matrix theory that will be needed for Section \ref{main}. \ 

\begin{theorem}  \label{thm:equivcondtIDcontraction}
Suppose $W_\alpha$ is a contractive weighted shift. \ Then the following statements are equivalent.
\begin{enumerate}
             \item $W_\alpha$ is moment infinitely divisible ($\mathcal{MID}$).
             \item $\log M_{\gamma}(0)$ and $\log M_{\gamma}(1)$ are CPD.
             \item For every $p>0$, $M_{\gamma}^p(0)$ and $M_{\gamma}^p(1)$ are positive definite.
             \item For every $p>0$, $M_{\gamma}^p(0)$ and $M_{\gamma}^p(1)$ are CPD.
             \item The moment sequence $\gamma$ is log completely monotone.
             \item The weight sequence $\alpha$ is log completely alternating.
\end{enumerate}
\end{theorem}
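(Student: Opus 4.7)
The plan is to prove the equivalences by completing the chain (i) $\Leftrightarrow$ (iii) $\Leftrightarrow$ (ii), then (iii) $\Rightarrow$ (iv) $\Rightarrow$ (ii), and finally to relate (v) and (vi) to (i) via sequence-level translations. For (i) $\Leftrightarrow$ (iii), I would use the fact that the moment sequence of the Schur power $W_{\alpha}^p$ is $\gamma^p = (\gamma_n^p)$, so that $M_{\gamma^p}(j) = M_{\gamma}^p(j)$ (entrywise $p$-th power) for $j=0,1$; since for a contractive weighted shift subnormality is equivalent via the Hausdorff moment problem to the positivity of both Hankel moment matrices, MID asserts precisely that every Schur power of $M_{\gamma}(0)$ and $M_{\gamma}(1)$ is positive. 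The equivalence (ii) $\Leftrightarrow$ (iii) is then the classical theorem on infinitely divisible matrices stated in the introduction: a Hermitian matrix with positive entries is Schur infinitely divisible if and only if its entrywise logarithm is CPD; one applies it on each finite compression of $M_{\gamma}(j)$.

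The new equivalence is with (iv). The implication (iii) $\Rightarrow$ (iv) is trivial since a positive matrix is automatically CPD. For the converse (iv) $\Rightarrow$ (ii), I would use a differentiation-at-zero trick. Fix a finite compression of size $k+1$ and a vector $\bm{v} \in \mathbb{C}^{k+1}$ with $\sum_i v_i = 0$; let $J$ denote the all-ones matrix of that size. Set
\begin{equation*}
f(p) := \bm{v}^{\ast} M_{\gamma}^p(j)\, \bm{v} \qquad (p > 0).
\end{equation*}
Since $\sum_i v_i = 0$, we have $f(0) = \bm{v}^{\ast} J \bm{v} = 0$, while hypothesis (iv) gives $f(p) \ge 0$ for every $p > 0$. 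Differentiating the Schur exponential entrywise yields $f'(p) = \bm{v}^{\ast}\bigl(\log M_{\gamma}(j) \odot M_{\gamma}^p(j)\bigr) \bm{v}$ (Hadamard product), so that $f'(0^{+}) = \bm{v}^{\ast}(\log M_{\gamma}(j))\, \bm{v}$; and this right-derivative must be nonnegative because $f \ge 0 = f(0)$ immediately to the right of $0$. Hence $\log M_{\gamma}(j)$ is CPD on every finite compression, which is exactly (ii).

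For the sequence-level statements, the definition of log complete monotonicity of $\gamma$ asserts that $\gamma^p$ is completely monotone for every $p > 0$; by Hausdorff's theorem (available because $W_{\alpha}$ is contractive and $\gamma$ is bounded), complete monotonicity of $\gamma^p$ is equivalent to $\gamma^p$ being the moment sequence of a Borel probability on $[0,1]$, i.e., subnormality of $W_{\alpha^p}$, and this is exactly (i). The equivalence (vi) $\Leftrightarrow$ (v) then follows from the standard weight-moment dictionary $\gamma_{n+1}/\gamma_n = \alpha_n^2$ together with the Bernstein-type correspondence between completely alternating weight sequences and completely monotone moment sequences, applied Schur-power by Schur-power. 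The principal obstacle is making the differentiation argument in (iv) $\Rightarrow$ (ii) fully rigorous on infinite matrices, which I handle by passing to finite compressions and using the entrywise smoothness of $p \mapsto \gamma_n^p$; a secondary subtlety is pinning down the precise definition of \emph{log completely alternating} for weights so that (vi) $\Leftrightarrow$ (v) reads off cleanly from the weight-moment bijection.
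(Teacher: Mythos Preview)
Your chain of equivalences is correct, and in particular your treatment of the only nontrivial implication, (iv) $\Rightarrow$ (ii), takes a genuinely different route from the paper's.  You differentiate the Schur-power quadratic form $f(p)=\bm{v}^{\ast}M_{\gamma}^{p}(j)\,\bm{v}$ at $p=0$: since $f(0)=\bm{v}^{\ast}J\bm{v}=0$ (because $\sum_i v_i=0$) and $f(p)\ge 0$ for $p>0$, the right derivative $f'(0^{+})=\bm{v}^{\ast}(\log M_{\gamma}(j))\,\bm{v}$ is nonnegative, and this gives (ii) on every finite principal compression.  The paper instead exploits its ``bridge'' theme: it tests each CPD matrix $M_{\gamma}^{p}(\ell,k)$ against the alternating-binomial vector to extract the $2k$--monotonicity of $\gamma^{p}$ for every $k$, then upgrades this to $2k$--hypermonotonicity (hence complete monotonicity) via the corollary that for a contractive shift $n$--monotone implies $n$--hypermonotone, and finally invokes the Agler--Embry characterization to conclude subnormality of each $W_{\alpha}^{p}$, i.e.\ (i).  Your argument is shorter and, notably, does not use the contractivity hypothesis at this step; the paper's argument is longer but is designed to illustrate precisely the CPD $\to$ $n$--contractivity mechanism that is the paper's central point.

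One caution concerning (v) and (vi): in this paper ``$\gamma$ is log completely monotone'' is \emph{defined} to mean that the sequence $(\ln\gamma_n)$ is completely monotone, not (as you write) that $\gamma^{p}$ is completely monotone for every $p>0$.  For contractive shifts these two formulations do coincide, but that coincidence is part of what is being claimed (and is established in the authors' earlier work \cite{BCE,BCESC}); so you should cite those results rather than absorb the equivalence into the definition.  The same remark applies to your handling of (vi).
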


\begin{proposition}       \label{prop:getAglerfrom1}
Let $W_{\alpha}$ be a unilateral weighted shift, $k \ge 1$, and $0 \le m \le k$. \ Assume that $W_{\alpha}$ is $(k,2m)$--CPD. \ Then $W_{\alpha}$ is $2m$--contractive, $(2m+2)$--contractive, \ldots, $2k$--contractive.
\end{proposition}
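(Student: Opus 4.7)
The plan is to combine identity~(\ref{identity2}) with a zero-padding maneuver that converts CPD at size $k+1$ into positivity of $Q_\gamma(\ell,j)$ for each smaller $j$ in the stated range. By identity~(\ref{identity2}) applied at size $j$, the assertion that $W_{\alpha}$ is $2j$-contractive is equivalent to
$$
Q_\gamma(\ell,j) \;=\; \bm{v}_j^{\,\ast}\, M_\gamma(\ell,j)\, \bm{v}_j \;=\; \sum_{i=0}^{2j}(-1)^i\binom{2j}{i}\gamma_{\ell+i} \;\ge\; 0 \qquad (\ell \ge 0),
$$
where $\bm{v}_j\in\mathbb{R}^{j+1}$ is the alternating binomial vector with $i$-th entry $(-1)^i\binom{j}{i}$; crucially, $\bm{v}_j$ has zero coordinate sum. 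Thus the proposition reduces to extracting these $Q_\gamma(\,\cdot\,,j)$-inequalities, for $m \le j \le k$, from the hypothesized CPD structure at size $k+1$.

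The key technical step is zero-padding. For each $j$ with $m \le j \le k$ and each offset $s$ with $0 \le s \le k-j$, I would define $\tilde{\bm{v}}_{j,s}\in\mathbb{R}^{k+1}$ by placing $\bm{v}_j$ in coordinates $s,s+1,\ldots,s+j$ and zeros elsewhere. Zero-padding preserves the zero-sum condition, so $\tilde{\bm{v}}_{j,s}$ is admissible as a test vector in any CPD inequality at size $k+1$, and a direct expansion along the supported entries yields
$$
\tilde{\bm{v}}_{j,s}^{\,\ast}\, M_\gamma(\ell,k)\, \tilde{\bm{v}}_{j,s} \;=\; \bm{v}_j^{\,\ast}\, M_\gamma(\ell+2s,j)\, \bm{v}_j \;=\; Q_\gamma(\ell+2s,j).
$$
So each CPD inequality at size $k+1$ delivers an inequality at size $j+1$ with the initial moment shifted by $2s$. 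I would then invoke $(k,2m)$--CPD to assert nonnegativity of the relevant quadratic forms at size $k+1$, obtaining $Q_\gamma(\ell',j)\ge 0$ for every $\ell'$ of the form $\ell+2s$ produced as $\ell$ ranges over the admissible starts and $s$ over $\{0,1,\ldots,k-j\}$. Iterating over $j = m, m+1, \ldots, k$ then yields the advertised staircase $2m, 2m+2, \ldots, 2k$-contractivity.

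The main obstacle I expect is the bookkeeping at the end: one must verify that, for each $j$ in the range, the shifted indices $\ell+2s$ produced by the $(k,2m)$--CPD hypothesis indeed cover every $\ell' \ge 0$ needed to certify $2j$-contractivity. The parameter $2m$ in the hypothesis controls the starting moment in the CPD conditions, while the constraint $j \ge m$ in the conclusion is precisely what guarantees enough offsets $s\in\{0,\ldots,k-j\}$ to bridge the gap between the admissible starts and every nonnegative $\ell'$; the upper bound $j \le k$ is what allows $\bm{v}_j$ to fit inside a length-$(k+1)$ vector in the first place. Once the index matching is carried out, each individual positivity step is an immediate consequence of applying the CPD definition to the zero-padded test vector and invoking identity~(\ref{identity2}).
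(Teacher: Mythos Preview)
Your zero-padding maneuver with alternating binomial vectors is precisely the mechanism the paper uses, but you have misread Definition~\ref{def:kmpositive}, and this causes the argument to collapse for $m\ge 1$. The hypothesis ``$(k,2m)$--CPD'' is \emph{not} a CPD condition on the size-$(k+1)$ moment matrix $M_\gamma(\ell,k)$; it is the assertion that, for every $i\ge 0$, the $(k-m+1)\times(k-m+1)$ Hankel matrix $M_{\nabla^{2m}\gamma}(i,k-m)$ built from the sequence of $2m$-th forward differences of $\gamma$ is CPD. Your sentence ``the parameter $2m$ in the hypothesis controls the starting moment in the CPD conditions'' reveals the confusion: $2m$ is the order of the difference operator, not a restriction on the starting index. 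Since $(k,2m)$--CPD is strictly weaker than $(k,0)$--CPD (this is the whole point of the ``ladders'' in Subsection~\ref{cutoffs}), you cannot invoke CPD of $M_\gamma(\ell,k)$ from the hypothesis when $m\ge 1$, and the inequality $\tilde{\bm v}_{j,s}^{\,*}M_\gamma(\ell,k)\tilde{\bm v}_{j,s}\ge 0$ is simply unavailable.

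The repair is minor and brings you exactly to the paper's proof: apply the same zero-padding idea to the correct matrix $M_{\nabla^{2m}\gamma}(i,k-m)$. For $1\le r\le k-m$, embed the length-$(r+1)$ alternating binomial vector $\bm v_r$ into $\mathbb{R}^{k-m+1}$ by zero-padding; its coordinates still sum to zero, so CPD applies, and identity~(\ref{identity2}) with $\gamma$ replaced by $\nabla^{2m}\gamma$ gives
\[
\tilde{\bm v}_{r,s}^{\,*}\,M_{\nabla^{2m}\gamma}(i,k-m)\,\tilde{\bm v}_{r,s}
= Q_{\nabla^{2m}\gamma}(i+2s,r)
= (\nabla^{2r}\nabla^{2m}\gamma)_{i+2s}
= (\nabla^{2m+2r}\gamma)_{i+2s}.
\]
Since $i$ already ranges over all nonnegative integers in the hypothesis, there is no index-coverage bookkeeping to do (take $s=0$), and you obtain $(2m+2r)$--contractivity for $r=1,\ldots,k-m$, i.e.\ $(2m+2)$-- through $2k$--contractivity, which is the paper's computation with vectors such as $(0,0,1,-1,0)$ and $(0,1,-3,3,-1,0,0)$.
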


\begin{theorem}
Let $W_{\alpha}$ be a contractive weighted shift whose weight sequence $\alpha$ has a limit (for example, if $W_{\alpha}$ is hyponormal), and fix $m \in \mathbb{N}$. \  Assume that for all $k \geq m$, $W$ is $(k,2m)$--PD or $(k,2m)$--CPD. \ Then $W$ is subnormal.
\end{theorem}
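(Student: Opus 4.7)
The plan is to reduce subnormality of $W_\alpha$ to the complete monotonicity of its moment sequence $\gamma$ (Hausdorff's theorem, equivalently the Agler--Embry characterization for contractive shifts), and then to upgrade the hypothesis, which only gives positivity of even-order finite differences of $\gamma$, to positivity at all orders. Set
\[
\tau_{n,\ell} := \sum_{i=0}^{n}(-1)^{i}\binom{n}{i}\gamma_{\ell+i}.
\]
Since $(k,2m)$--PD implies $(k,2m)$--CPD, applying Proposition \ref{prop:getAglerfrom1} for each $k \geq m$ and letting $k \to \infty$ yields that $W_\alpha$ is $2j$--contractive for every $j \geq m$; equivalently, $\tau_{2j,\ell} \geq 0$ for all $\ell \geq 0$ and all $j \geq m$. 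The goal is then to show $\tau_{n,\ell} \geq 0$ for every $n,\ell \geq 0$.

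For the forward bootstrap, fix $j \geq m$ and consider the sequence $\ell \mapsto \tau_{2j,\ell}$. It is non-negative, bounded (contractivity gives $0 \leq \gamma_\ell \leq 1$, whence $|\tau_{2j,\ell}| \leq 2^{2j}$), and discretely convex, since its second finite difference equals $\tau_{2j+2,\ell} \geq 0$. A bounded, discretely convex sequence on $\mathbb{N}_0$ is automatically non-increasing: its first differences are non-decreasing, and if any were strictly positive, all subsequent first differences would be bounded below by that positive value, forcing linear growth and contradicting boundedness. Hence $\tau_{2j+1,\ell} = \tau_{2j,\ell} - \tau_{2j,\ell+1} \geq 0$, and combined with the even case this gives $\tau_{n,\ell} \geq 0$ for all $n \geq 2m$ and $\ell \geq 0$.

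For the backward induction, the hypothesis that $\alpha$ has a limit, combined with contractivity, makes $\gamma$ non-increasing and bounded, so $\gamma_\ell \to L$ for some $L \in [0,1]$, and for every fixed $n \geq 1$,
\[
\tau_{n,\ell} \xrightarrow[\ell \to \infty]{} L \sum_{i=0}^{n}(-1)^{i}\binom{n}{i} = L(1-1)^{n} = 0.
\]
Iterate downward through $n = 2m-1, 2m-2, \ldots, 1$: at each step, $\tau_{n+1,\ell} \geq 0$ is already known (from the forward step or the previous iteration), so $\ell \mapsto \tau_{n,\ell}$ is non-increasing, and being non-increasing with limit $0$ it is non-negative. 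Together with the trivial $\tau_{0,\ell} = \gamma_\ell \geq 0$, this yields complete monotonicity of $\gamma$ and hence subnormality of $W_\alpha$. The main obstacle I anticipate is the forward bootstrap, namely bridging the given even-index positivity to odd-index positivity above level $2m$; the decisive enabler is the elementary but essential fact that a bounded, discretely convex sequence on $\mathbb{N}_0$ is monotone non-increasing, after which the vanishing of all higher finite differences in the limit (inherited from $\gamma$'s having a limit) makes the remaining backward step automatic.
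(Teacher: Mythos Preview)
Your proof is correct and follows essentially the paper's route: invoke Proposition~\ref{prop:getAglerfrom1} to obtain $2k$--contractivity for all $k \geq m$, then pass from $n$--monotonicity to $n$--hypermonotonicity via the limit argument (your backward induction is exactly the proof of Proposition~\ref{nXvsnhyperX}/Corollary~\ref{contnmonoimpnhypermono}), and conclude by Agler--Embry. Your forward bootstrap, while correct, is unnecessary: since $2k$--monotonicity holds for arbitrarily large $k$, the backward induction alone, started from any such $2k$, already yields $j$--monotonicity for every $j \leq 2k$, hence for all $j \geq 1$.
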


As a consequence, we obtain a result including what we believe is a new sufficient condition for subnormality. 

\begin{proposition} \label{propnew2}
Let $W_{\alpha}$ be a weighted shift with moment sequence $\gamma = (\gamma_n)_{n=0}^\infty$. \ The following statements are equivalent. \newline
(i) $W_{\alpha}$ is $\mathcal{MID}$. \newline
(ii) The sequence $(\delta_n)_{n=0}^\infty$ with $\delta_n = \ln\left(\frac{\gamma_n \gamma_{n+2}}{\gamma_{n+1}^2}\right)$ is a Stieltjes moment sequence. \newline
(iii) The weighted shift with moments $\left(\frac{\delta_n}{\delta_0}\right)$ is subnormal.
\end{proposition}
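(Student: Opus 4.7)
The plan is to leverage Theorem~\ref{thm:equivcondtIDcontraction}, which reduces (i) to the statement that the log-Hankel matrices $\log M_{\gamma}(0)$ and $\log M_{\gamma}(1)$ are CPD. The crucial observation is that $\delta_n$ is precisely the second forward difference of $\log \gamma_n$: setting $a_n := \log \gamma_n$, one has $\delta_n = a_n - 2a_{n+1} + a_{n+2}$. I will therefore relate CPD of a Hankel matrix to positive semidefiniteness of the Hankel matrix built from its second differences, and then combine this correspondence with the classical Stieltjes moment characterization to establish (i) $\Longleftrightarrow$ (ii).

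The key lemma is: for any real sequence $(a_n)$ with associated second differences $b_n := a_n - 2a_{n+1} + a_{n+2}$, the Hankel matrix $(a_{i+j})$ is CPD if and only if the Hankel matrix $(b_{i+j})$ is PSD. The proof is a single change of variables. Any finitely supported $v = (v_i)$ with $\sum_i v_i = 0$ can be written uniquely as $v_i = w_i - w_{i-1}$, where $w_{-1} := 0$ and $w_i := -\sum_{j > i} v_j$, and this correspondence is a bijection onto all finitely supported $w$. A direct expansion and reindexing of the four resulting sums yields the telescoping identity
\[
\sum_{i,j} v_i v_j\, a_{i+j} \;=\; \sum_{i,j} w_i w_j\, (a_{i+j} - 2 a_{i+j+1} + a_{i+j+2}) \;=\; \sum_{i,j} w_i w_j\, b_{i+j}.
\]
Applying this lemma with $a_n = \log \gamma_n$ shows that $\log M_{\gamma}(0)$ is CPD iff $M_\delta(0) := (\delta_{i+j})$ is PSD; applying it with $a_n = \log \gamma_{n+1}$ shows that $\log M_{\gamma}(1)$ is CPD iff $M_\delta(1) := (\delta_{i+j+1})$ is PSD. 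By the classical solution of the Stieltjes moment problem, $(\delta_n)$ is a Stieltjes moment sequence precisely when both $M_\delta(0)$ and $M_\delta(1)$ are PSD, which yields (i) $\Longleftrightarrow$ (ii).

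The remaining equivalence (ii) $\Longleftrightarrow$ (iii) is an application of Berger's theorem for weighted shifts: a weighted shift whose moment sequence begins at $1$ is subnormal exactly when that sequence is a Stieltjes moment sequence, and dividing by the positive constant $\delta_0$ only rescales the representing measure. I expect the main technical obstacle to be the careful verification of the CPD-to-PSD correspondence, in particular that $v \mapsto w$ really is a bijection between finitely supported zero-sum vectors and arbitrary finitely supported vectors, so that the implication is genuinely reversible. A minor edge case is $\delta_0 = 0$, which forces $\alpha_n$ to be constant and all $\delta_n \equiv 0$; here the zero measure provides a trivial Stieltjes interpolant and all three statements hold.
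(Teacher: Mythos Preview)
Your proof is correct and follows essentially the same route as the paper: the paper's argument is simply that $W_\alpha$ is $\mathcal{MID}$ iff $\log M_\gamma(0)$ and $\log M_\gamma(1)$ are CPD, then invokes Theorem~\ref{thm114}(i)$\Leftrightarrow$(ii) (together with Remark~\ref{rem115} for the Hankel case) to pass to positive definiteness of the Hankel matrices built from the second differences $\delta_n$, and finally cites the two-matrix Stieltjes criterion. Your change-of-variables lemma $v_i = w_i - w_{i-1}$ is just a direct, self-contained proof of the Hankel specialization of Theorem~\ref{thm114}(i)$\Leftrightarrow$(ii), so the substance is identical.

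One small citation issue: you invoke Theorem~\ref{thm:equivcondtIDcontraction}, which carries a contractivity hypothesis that Proposition~\ref{propnew2} does not assume. The equivalence you actually need, $\mathcal{MID} \Leftrightarrow \log M_\gamma(0),\,\log M_\gamma(1)$ CPD, holds without contractivity and is recorded in the corollary immediately preceding Corollary~\ref{cor:infDivkHN}; cite that instead. Your handling of the $\delta_0=0$ edge case is a nice addition that the paper leaves implicit.
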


We now focus on rank-one perturbations of the Agler shifts (see Subsection \ref{Aglershifts} for the definition); our study reveals some remarkable cutoffs for $(k,2m)$--positive definiteness, and for the related weights-squared sequence to be $m$--alternating (for the terminology, see Definition \ref{def:kmpositive}, and page \pageref{defn}, resp.; in particular, $(k,0)$--positive definiteness corresponds to $k$--hyponormality).

\begin{theorem} \label{thm14A}
Let $A_j(x)$ be the perturbation of the Agler shift $A_j$ in which the zeroth weight $(\alpha^{(j)})_0:= \sqrt{\frac{1}{j}}$ is replaced by $(\alpha^{(j)})_0(x) := \sqrt{\frac{x}{j}}$, and let $k \in \mathbb{N}$ and $m \in \{0,1,\ldots,k\}$. \newline
(i) $A_j(x)$ is  $(k,2m)$--PD if and only if $x \leq p(j,k,m)$, where
\begin{equation}
p(j,k,m) \!=\! \ddfrac{(k+1-m)(j+k+m-1)}{k^2 + j k + 2m - j m - m^2}\!=\!\ddfrac{(j+k+m)(k-m)+2m+j-1}{(j+k+m)(k-m)+2m}.
\end{equation}
(ii) \ $A_j(x)$ is $(k,2m)$--PD if and only if $A_j(x)$ is $((j+k+m)(k-m)+2m)$--contractive. 
\end{theorem}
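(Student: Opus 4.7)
The plan is to exploit the fact that for the perturbed shift $A_j(x)$ only the $\ell=0$ moment block depends on $x$ in a nontrivial way, and then to reduce both conditions in the theorem to the same beta-weighted extremal problem.

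First I would compute the moments of $A_j(x)$ directly from the perturbed weights: $\tilde\gamma_0=1$ and $\tilde\gamma_n=x\gamma_n^{(j)}$ for $n\ge 1$, where $\gamma_n^{(j)}=\int_0^1 t^n\,d\mu_j(t)$ with $d\mu_j(t)=(j-1)(1-t)^{j-2}\,dt$ the Berger measure of the unperturbed Agler shift. It follows that for every $\ell\ge 1$ the shifted Hankel block satisfies $M_{\tilde\gamma}(\ell,k)=x\,M_{\gamma^{(j)}}(\ell,k)\succeq 0$ automatically for $x\ge 0$, by subnormality of $A_j$. Every constraint from $(k,2m)$--PD or $N$--contractivity at levels $\ell\ge 1$ is therefore vacuous, and the only substantive block is
$$
M_{\tilde\gamma}(0,k) \;=\; x\,M_{\gamma^{(j)}}(0,k)+(1-x)\,E_{00}.
$$

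For part (i), I would parametrize an admissible test vector for $(k,2m)$--PD by a polynomial $P_{\bm v}(t)=(1-t)^m R(t)$ with $\deg R\le k-m$. Using $P_{\bm v}(0)=R(0)$, the level-zero quadratic form reads
$$
\bm v^{\top}M_{\tilde\gamma}(0,k)\,\bm v \;=\; x\int_0^1 (1-t)^{2m}R(t)^2\,d\mu_j(t)+(1-x)\,R(0)^2,
$$
so nonnegativity over all admissible $R$ is equivalent to $x\le 1/(1-I_\star)$, where
$$
I_\star(j,k,m)=\min_{R(0)=1,\;\deg R\le k-m}\int_0^1 (1-t)^{2m}R(t)^2\,d\mu_j(t).
$$
The change of variable $s=1-t$ recasts this as the Jacobi reproducing-kernel extremal problem $\min_{\tilde R(1)=1,\,\deg\tilde R\le k-m}(j-1)\int_0^1 s^{2m+j-2}\tilde R(s)^2\,ds$, whose value (via the Christoffel--Darboux formula applied to the shifted Jacobi orthonormal polynomials, or by a direct induction on the degree bound) is $(j-1)/[(k+1-m)(j+k+m-1)]$. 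Inverting $1-I_\star$ and applying the identity
$$
(k+1-m)(j+k+m-1)-(j-1)=k^2+jk+2m-jm-m^2
$$
produces the stated formula for $p(j,k,m)$.

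For part (ii), applying the same level-zero reduction to the $N$--contractivity test gives
$$
\sum_{i=0}^{N}(-1)^i\binom{N}{i}\tilde\gamma_i \;=\; 1+x\left(\int_0^1 (1-t)^N\,d\mu_j(t)-1\right) \;=\; 1-\frac{xN}{N+j-1},
$$
so $A_j(x)$ is $N$--contractive if and only if $x\le (N+j-1)/N=1+(j-1)/N$. Setting $N=(j+k+m)(k-m)+2m$ yields exactly $p(j,k,m)$ from part (i), giving the equivalence. The main obstacle is the closed-form evaluation of $I_\star(j,k,m)$; while it is a standard reproducing-kernel identity for the Jacobi weight $s^{2m+j-2}$, tracking the interaction of the $(1-t)^{2m}$ factor with the weight $(1-t)^{j-2}$ and the boundary normalization at $s=1$ requires careful bookkeeping. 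Once $I_\star$ is in hand, both parts of the theorem follow cleanly from the $\ell=0$ reduction.
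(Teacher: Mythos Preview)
Your argument is correct and arrives at the same cutoff as the paper, but the packaging is genuinely different. The paper never introduces the polynomial extremal problem or Christoffel functions; instead it works directly with the $(k-m+1)\times(k-m+1)$ matrix $N(k,m):=M_{\nabla^{2m}(\gamma^{(j)})}(0,k-m)$, invokes the identity
\[
\nabla^{2m}(\gamma^{(j)}) \;=\; \frac{j-1}{\,j+2m-1\,}\,\gamma^{(j+2m)}
\]
to rewrite $N(k,m)$ and $\hat N(k,m)$ as scalar multiples of Agler moment matrices for $A_{j+2m}$, and then reads off the Schur--complement ratio $\det N(k,m)/\det \hat N(k,m)$ from \cite[Lemma~2.5]{AE}; the cutoff $1/(1-\text{ratio})$ is obtained by the nested determinant test. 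Your quantity $I_\star$ is exactly that Schur complement (the minimum of $\bm w^{T}M\bm w$ over $w_0=1$ equals $\det M/\det \hat M$), so the two computations are the same object viewed through different lenses: the paper's route is a determinant calculation that leans on the $\nabla^{2m}$ identity and a precomputed lemma, while yours absorbs $(1-t)^{2m}$ into the measure and appeals to the reproducing-kernel/Christoffel formula for the weight $s^{2m+j-2}$ at the endpoint $s=1$. Your approach is arguably more conceptual and explains \emph{why} the Agler family reappears (the weight $s^{2m+j-2}$ is, up to normalization, the Berger measure of $A_{j+2m}$ after $s=1-t$), but you defer the closed-form evaluation of $I_\star$ to a ``standard'' identity, whereas the paper's route makes that step concrete via the cited lemma. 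For part~(ii) the two arguments coincide: you compute the beta integral $\int_0^1(1-t)^N\,d\mu_j$ directly, and the paper cites the same cutoff $c(j,N)=(N+j-1)/N$ from \cite{AE}.
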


\begin{proposition}
Let $A_j(x)$ be the zeroth weight perturbation of the Agler shift $A_j$ with weight sequence $\alpha^{(j)}$ as in Theorem \ref{thm14A}. \  Then the weights-squared of $A_j(x)$ are $m$--alternating if and only if
\begin{equation}  \label{eq:naltAjofx1}
x \leq 1 + \frac{(j-1) m!}{\prod_{i=1}^{m} (j+i)}.
\end{equation}
If we take $j=2$, which corresponds to the Bergman shift, the weights-squared of $A_2(x)$ are $m$--alternating if and only if $A_2(x)$ is $\frac{(m+1)(m+2)}{2}$--contractive.
\end{proposition}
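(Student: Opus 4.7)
The plan is to reduce both conditions in the proposition to explicit inequalities on $x$, and then verify that they coincide when $j=2$ and $n=(m+1)(m+2)/2$. Let $a_n$ denote the $n$--th weights-squared of $A_j(x)$, so $a_0 = x/j$ and $a_n = b_n := (n+1)/(n+j)$ for $n \ge 1$. The sequence $(b_n)$ is the weights-squared of the unperturbed Agler shift $A_j$; the direct computation
\[
b_{n+1}-b_n = \frac{j-1}{(n+j)(n+j+1)}
\]
exhibits its first forward difference as a completely monotone sequence, so $(b_n)$ is completely alternating. Consequently the inequalities $(-1)^{k+1}\Delta^k a_n \ge 0$ defining $m$--alternating are automatic for $n \ge 1$, and the only constraints on $x$ come from $n=0$.

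Since $a_i - b_i$ is supported at $i=0$, one has $\Delta^k a_0 = \Delta^k b_0 + (-1)^k(x-1)/j$. Writing $b_n = 1 - (j-1)/(n+j)$ and using the standard identity
\[
\Delta^k\!\left[\frac{1}{n+j}\right]_{n=0} = \frac{(-1)^k\,k!}{j(j+1)\cdots(j+k)}
\]
(immediate by induction), the condition $(-1)^{k+1}\Delta^k a_0 \ge 0$ rearranges to
\[
x \le 1 + \frac{(j-1)\,k!}{\prod_{i=1}^{k}(j+i)}.
\]
The ratio of successive values of $k!/\prod_{i=1}^{k}(j+i)$ equals $(k+1)/(j+k+1)<1$, so the right-hand side is strictly decreasing in $k$; hence for $m$--alternating the binding constraint is at $k=m$, producing (\ref{eq:naltAjofx1}).

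For the Bergman case $j=2$, the bound in (\ref{eq:naltAjofx1}) simplifies to $1 + 2/[(m+1)(m+2)]$. For comparison I would derive the $n$--contractivity threshold for $A_j(x)$ directly: the moments satisfy $\gamma_n = x\,\gamma^{(j)}_n$ for $n \ge 1$ and $\gamma_0 = 1$, so the contractivity inequality at level $\ell \ge 1$ is automatic from subnormality of $A_j$, while at $\ell=0$ it becomes $1 + x\sum_{i=1}^{n}(-1)^i\binom{n}{i}\gamma^{(j)}_i \ge 0$. Using the Berger measure $d\mu_j(t)=(j-1)(1-t)^{j-2}\,dt$ of $A_j$, one computes $\sum_{i=0}^{n}(-1)^i\binom{n}{i}\gamma^{(j)}_i = (j-1)/(n+j-1)$; the $n$--contractivity condition then reads $x \le 1 + (j-1)/n$. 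Substituting $j=2$ and $n=(m+1)(m+2)/2$ yields precisely $1 + 2/[(m+1)(m+2)]$, matching the $m$--alternating bound and giving the claimed equivalence.

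The main conceptual step is the localization to the $n=0$ constraint, which relies on the already-alternating behavior of the unperturbed Agler weights-squared; once that is in hand, the rest is routine bookkeeping with the closed form for $\Delta^k[1/(n+j)]$ and a transparent monotonicity check.
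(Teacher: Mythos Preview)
Your argument is correct and follows essentially the same route as the paper's proof: localize to the index-zero condition using the known complete alternating property of the unperturbed Agler weights-squared, compute the relevant iterated difference in closed form (via the identity for $\nabla^k[1/(n+j)]$), and then match the resulting threshold in $x$ against the known $n$--contractivity cutoff $c(j,n)=(n+j-1)/n$. Your version is somewhat more self-contained, since you rederive both the complete alternating property of $(b_n)$ and the contractivity cutoff (via the Berger measure), whereas the paper simply cites these from \cite{BCE} and Theorem~\ref{AdamsExner}.

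One small point of care: as defined in the paper, $m$--alternating is the \emph{single} condition $(\nabla^{m}a)_n\le 0$ for all $n$, not the family of conditions for all orders $k\le m$ (that would be $m$--hyperalternating). Your sentence about ``the binding constraint is at $k=m$'' and the monotonicity check in $k$ suggest you are imposing all orders up to $m$; this extra work is harmless here because the bound is indeed tightest at $k=m$, so your conclusion is unaffected, but strictly speaking only the $k=m$ inequality is needed.
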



\section{Notation and Preliminaries} \label{prelim}

\subsection{Unilateral weighted shifts}

To set the notation for weighted shifts, let $\mathbb{N}_0  := \{0, 1, \ldots\}$ and let $\ell^2$ denote the classical Hilbert space $\ell^2(\mathbb{N}_0)$ with canonical orthonormal basis $e_0, e_1, \ldots$ (note that we begin indexing at zero). \  Let $\alpha: \alpha_0, \alpha_1, \ldots$ be a (bounded) positive \textit{weight sequence} and  $W_\alpha$  the weighted shift, defined by linearity and $W_\alpha e_j := \alpha_j e_{j+1} \;\; (j \ge 0)$. \ (Weighted shifts can be defined for any bounded sequence $\alpha$; however, for all questions of interest to us, without loss of generality we can assume (as we do) that $\alpha$ is positive.) \ When $\alpha$ is the constant weight sequence $1,1,\ldots$, the resulting (un-weighted) shift is the classical unilateral shift $U_+e_{j}:=e_{j+1} \; (j \ge 0)$. \ The \textit{moments} $\gamma = (\gamma_n)_{n=0}^\infty$ of the shift are defined by $\gamma_0 := 1$ and $\gamma_n := \prod_{j=0}^{n-1} \alpha_j^2$ for $n \geq 1$. \ From  \cite[III.8.16]{Con} and \cite{GW}, a weighted shift $W_\alpha$ is subnormal if and only if it has a \textit{Berger measure}, meaning a probability measure $\mu$ supported on $[0, \|W_\alpha\|^2]$ such that
$$
\gamma_n = \int_0^{\|W_\alpha\|^2} t^n d \mu(t), \hspace{.2in} n = 0, 1, \ldots .
$$

Applying the Cauchy-Schwarz inequality in $L^2(\mu)$ to the monomials $t^{n/2}$ and $t^{(n+2)/2}$ yields $\gamma_{n+1}^2 \le \gamma_n \gamma_{n+2} \; (n \ge 0)$, and hence $\alpha_n^2 \le \alpha_{n+1}^2 \; (n \ge 0)$. \ Recall that an operator $T$ is hyponormal if $T^*T-TT^* \ge 0$, and one computes easily that for a weighted shift $W_{\alpha}$ this is exactly $\alpha_n^2 \le \alpha_{n+1}^2$ for all $n \ge 0$. 

The canonical polar decomposition of $W_{\alpha}$ is $U_+ P_{\alpha}$, where $P_{\alpha}$ denotes the diagonal operators with diagonal entries $\alpha_0,\alpha_1, \ldots$ . \ The {\it Aluthge transform} \cite{Alu} of $W_{\alpha}$ is given by $AT(W_{\alpha}):=\sqrt{P_{\alpha}}U_+ \sqrt{P_{\alpha}}$, and this is the weighted shift with weight sequence $\sqrt{\alpha_0 \alpha_1},\sqrt{\alpha_1 \alpha_2},\sqrt{\alpha_2 \alpha_3},\ldots$ . \ It is easy to see that $AT$ preserves hyponormality, but whether it preserves subnormality is a nontrivial problem, addressed in detail in \cite{Ex2, CuEx, BCE,BCESC}. \ A sufficient condition for subnormality, pointed out in \cite{CuEx}, is the subnormality of $W_{\sqrt{\alpha}}$.


\subsection{Agler shifts} \label{Aglershifts}
We briefly recall a class of subnormal unilateral weighted shifts with Berger measures that can be easily computed. \ The {\it Agler shifts} $A_j$, $j = 1, 2, \ldots$, are those with weight sequence $\sqrt{\frac{n+1}{n+j}}$, $n = 0, 1, \ldots$ . \  (These were used in \cite{Ag} as part of a model theory for hypercontractions.) \ Since $A_1$ is the unilateral shift, its Berger measure is $\delta_{1}$ (the Dirac point mass at $\{1\}$); for $j \ge 2$ the Berger measure of $A_j$ is $d \mu(t)=(j-1)(1-t)^{j-2}dt$ on $[0,1]$. \ The Agler shifts appear naturally as the weighted shifts associated with the rows (and columns) of the weight diagram for the Drury-Arveson $2$--variable weighted shift \cite[Pages 29--30]{CRC}. 

In \cite{Ex2}, G.R. Exner proved that for $j = 2, 3, \ldots$, and $p > 0$, the (Schur) $p$--th power of $A_j$ is subnormal, as is any $m$--th iterated Aluthge transform of $A_j$. \ The proof (which uses monotone function theory) offers no information about the Berger measure of the resulting shift; however, it brings to the fore the significance of complete monotonicity in the study of $\mathcal{MID}$ shifts, discussed in the next subsection.


\subsection{$\mathcal{MID}$ shifts}

It is useful to note, in considering $\mathcal{MID}$ weighted shifts, that raising every weight to the $p$--th power is equivalent to raising every moment to the $p$--th power. \ We refer the reader to \cite{BCE,BCESC} for an initial study of moment infinite divisibility;  this paper constitutes a continuation of that study.

A companion to the class of Agler shifts is the class of {\it homographic shifts}, that is, those weighted shifts denoted $S(a,b,c,d)$ (where $a, b, c, d >0$ and with $a d > b c$), with weights $\sqrt{\frac{a n + b}{c n + d}}$. \ These shifts were defined and studied in \cite{CPY}, together with certain subshifts of such shifts, and their subnormality established. \  Observe as well that if we throw away a finite number of terms at the beginning of a completely alternating (or log completely alternating) sequence, what remains is still in the original class. \ Therefore, some of the results to follow may be generalized easily to restrictions of shifts to the canonical invariant subspaces of finite co-dimension.

\begin{lemma}(\cite{Ex2}) \! \! The Agler shifts are all $\mathcal{MID}$, as are the contractive shifts $S(a,b,c,d)$  (i.e., $a, b, c, d >0$, $a d > b c$ and $a \le c$), and their subshifts.
\end{lemma}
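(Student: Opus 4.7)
The plan is to reduce everything to the homographic shift $S(a,b,c,d)$ with $a \leq c$ and $ad > bc$, since the Agler shift $A_j$ is the special case $S(1,1,1,j)$, and (as the statement of the lemma signals) the relevant subshifts of $A_j$ or of $S(a,b,c,d)$ are themselves homographic shifts of the same form with the inequalities $a \leq c$ and $ad > bc$ preserved. For such a shift the moments are given in closed form by
\begin{equation*}
\gamma_n \;=\; \left(\frac{a}{c}\right)^{\! n}\!\frac{\Gamma(n + b/a)\,\Gamma(d/c)}{\Gamma(b/a)\,\Gamma(n + d/c)}.
\end{equation*}
Setting $r := a/c$, $\beta := b/a$, $\delta := d/c$, the hypotheses become $r \leq 1$ and $\beta < \delta$.

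By Theorem \ref{thm:equivcondtIDcontraction} (the equivalence of (i) and (v)), to show $W_\alpha \in \mathcal{MID}$ it suffices to verify that $\gamma$ is log completely monotone, i.e., that $-\log \gamma_n$ is the restriction to $\mathbb{N}_0$ of a Bernstein function on $[0,\infty)$ vanishing at $0$. The key step is to apply Malmsten's integral representation for $\log\Gamma$ to obtain
\begin{equation*}
-\log \gamma_n \;=\; n\,(-\log r) \;+\; \int_{0}^{\infty} (1 - e^{-nt})\, \frac{e^{-\beta t} - e^{-\delta t}}{1 - e^{-t}}\cdot \frac{dt}{t}.
\end{equation*}
Because $r \leq 1$ makes the linear coefficient nonnegative and $\beta < \delta$ makes the kernel nonnegative, the right-hand side is a L\'evy--Khintchine representation with trivial drift and positive L\'evy measure, exhibiting $-\log \gamma_n$ as a Bernstein function of $n$.

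To finish, for every $p > 0$ the function $n \mapsto \gamma_n^p = \exp\!\big(-p \cdot (-\log \gamma_n)\big)$ is the composition of the completely monotone function $e^{-px}$ with a Bernstein function that vanishes at $0$, hence is itself completely monotone in $n$; restricting to $n \in \mathbb{N}_0$ produces a Hausdorff moment sequence, so each Schur power $W_\alpha^p$ is subnormal with Berger measure supported in $[0,1]$. This is the definition of $W_\alpha \in \mathcal{MID}$. The subshift statement then follows at once, since both $A_j$ and $S(a,b,c,d)$ remain in the contractive homographic class under the shift $n \mapsto n+k$: the updated parameters $S(a,\,ak+b,\,c,\,ck+d)$ preserve both $a \leq c$ and $ad > bc$.

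The main technical obstacle is justifying the integral representation: one must carefully regroup the logarithmic divergences of $\log\Gamma(\,\cdot\,)$ near $t=0$, absorbing them into the constant $\log\frac{\Gamma(\delta)}{\Gamma(\beta)}$, so that what remains multiplies the factor $(1 - e^{-nt})$, which vanishes at $t=0$ at precisely the rate needed to cancel the $1/t$ singularity and guarantee absolute convergence. Everything else is bookkeeping involving standard properties of Bernstein and completely monotone functions.
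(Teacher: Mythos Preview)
Your argument is correct, and it is in the same spirit as what the paper has in mind: the lemma is not proved in the paper at all but simply cited from \cite{Ex2}, and the paper explicitly remarks that the proof there ``uses monotone function theory.'' Your route through the closed-form moments, Malmsten's integral for $\log\Gamma$, and the resulting L\'evy--Khintchine/Bernstein representation of $-\log\gamma_n$ is exactly that kind of argument, so there is no substantive divergence to report.

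Two small comments. First, your reduction ``$A_j = S(1,1,1,j)$'' sits inside the hypothesis $ad>bc$ only for $j\ge 2$; the case $A_1$ (the unweighted shift) falls outside your framework and should be disposed of separately (trivially, since every Schur power of $U_+$ is $U_+$). Second, your appeal to Theorem~\ref{thm:equivcondtIDcontraction} is a forward reference in the paper's exposition order; it is harmless logically (that theorem's proof does not use this lemma), and in any event you do not actually need it, since you go on to show directly that each $\gamma^p$ is completely monotone and hence a Hausdorff moment sequence. You could simply drop the citation of Theorem~\ref{thm:equivcondtIDcontraction} and let the direct argument stand on its own.
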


By definition, a weighted shift $W_{\alpha}$ is completely hyperexpansive when the inequalities for $n$--contractivity in (\ref{cond100}) are reversed; that is, 
$$
\sum_{i=0}^n (-1)^i \binom{n}{i} \gamma_{k+i} \le 0, \qquad k = 0, 1, \ldots ,
$$
for all $n \ge 1$. \ For instance, the Dirichlet shift is completely hyperexpansive. \ We recall that a completely hyperexpansive weighted shift $W_{\alpha}$ gives rise to a subnormal weighted shift by forming a new weight sequence $\delta$ where $\delta_j := \frac{1}{\alpha_j}$ for all $j \ge 0$;  further, one cannot necessarily begin with a subnormal shift, and, by taking reciprocals of the weights, generate a completely hyperexpansive shift. \ (See the discussion after \cite[Proposition 6]{At}.) \ Nevertheless, the following result holds.

\begin{lemma} (\cite[Corollary 4.1]{BCE}) \label{lem22} \ Let $W_\alpha$ be a completely hyperexpansive \linebreak weighted shift with positive weight sequence $(\alpha_n)_{n=0}^\infty$. \ Then the weighted shift with weight sequence $(\frac{1}{\alpha_n})_{n=0}^\infty$ is not only subnormal but is $\mathcal{MID}$.
\end{lemma}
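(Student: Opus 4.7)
The plan is to use Theorem~\ref{thm:equivcondtIDcontraction}(iii) to reduce $W_{1/\alpha} \in \mathcal{MID}$ to showing that $(\gamma_n^{-p})_{n\ge 0}$ is completely monotone for every $p > 0$, and then to verify this via the Laplace representation of $x^{-p}$ together with the Bernstein/CM correspondence applied to the CA moment sequence $\gamma$.

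First I would translate. With $\beta_n := 1/\alpha_n$ the shift $W_\beta$ has moment sequence $\tilde\gamma_n = 1/\gamma_n$. Complete hyperexpansivity of $W_\alpha$ is exactly the statement that $(\gamma_n)$ is completely alternating (CA) in the excerpt's sense; combined with $\gamma_0 = 1$ and the first-order CA inequality, this forces $\gamma$ to be non-decreasing with $\gamma_n \ge 1$, hence $\alpha_n \ge 1$ and $\beta_n \le 1$, so $W_\beta$ is contractive and Theorem~\ref{thm:equivcondtIDcontraction} applies. By part (iii) of that theorem, the desired $W_\beta \in \mathcal{MID}$ is equivalent to $(\gamma_n^{-p})_{n\ge 0}$ being a Hausdorff (i.e.\ completely monotone) moment sequence for every $p > 0$; the $p = 1$ instance already yields the subnormality half of the lemma.

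Next, for fixed $p > 0$, I would invoke the Laplace representation $x^{-p} = \Gamma(p)^{-1}\int_0^\infty e^{-sx} s^{p-1}\,ds$ to write
$$\gamma_n^{-p} \;=\; \frac{1}{\Gamma(p)} \int_0^\infty e^{-s\gamma_n}\, s^{p-1}\, ds.$$
Since each operator $(-1)^k\Delta^k$ in the variable $n$ is a fixed finite linear combination, it commutes with the $s$--integral, and the CM inequalities pass through the non-negative weighting $s^{p-1}/\Gamma(p)$. So it suffices to verify that, for each fixed $s > 0$, the sequence $(e^{-s\gamma_n})_{n\ge 0}$ is completely monotone; once that is in hand the chain closes and Theorem~\ref{thm:equivcondtIDcontraction}(iii) delivers the conclusion.

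The main obstacle is the intermediate assertion that a positive CA sequence $\gamma$ yields a CM sequence $(e^{-s\gamma_n})_n$ for every $s > 0$---the \emph{discrete Bernstein correspondence}. I would derive it as follows. Hausdorff applied to the CM first-difference sequence $\Delta\gamma$ produces a positive measure $\lambda$ on $[0,1]$ with $\Delta\gamma_n = \int_0^1 t^n\,d\lambda(t)$; summing gives the representation
$$\gamma_n = 1 + a n + \int_{[0,1)} \frac{1-t^n}{1-t}\,d\lambda(t), \qquad a := \lambda(\{1\}).$$
Interpolating $n\mapsto z$ in this formula exhibits $(\gamma_n)$ as the restriction to $\mathbb{N}_0$ of a genuine Bernstein function $g$ on $[0,\infty)$; Bernstein's theorem then yields $z\mapsto e^{-sg(z)}$ completely monotone on $[0,\infty)$, and the classical divided-difference mean-value identity $\Delta^k h(n) = h^{(k)}(\xi)$ transfers CM of $z\mapsto e^{-sg(z)}$ to CM of the restricted sequence. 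An equivalent purely discrete route runs through Theorem~\ref{thm:equivcondtIDcontraction}(v) via Frullani: $\log\gamma_n = \int_0^\infty s^{-1}(e^{-s}-e^{-s\gamma_n})\,ds$ together with the same integration argument shows $(\log\gamma_n)$ is CA and non-negative, which is precisely log complete monotonicity of $(1/\gamma_n)$.
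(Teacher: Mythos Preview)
The paper does not supply its own proof of this lemma; it simply cites \cite[Corollary~4.1]{BCE}.  So there is no in-paper argument to match against, and your task is really to give an independent proof.

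Your argument is correct.  A couple of remarks on presentation and logical order.  First, you do not actually need to invoke Theorem~\ref{thm:equivcondtIDcontraction}: the reduction ``$W_\beta\in\mathcal{MID}$ $\Leftrightarrow$ $(\gamma_n^{-p})_{n\ge 0}$ is completely monotone for every $p>0$'' is just the definition of $\mathcal{MID}$ together with Hausdorff's characterization of moment sequences on $[0,1]$ (available because $W_\beta$ is contractive).  Phrasing it this way avoids citing a theorem whose proof appears later in the paper and whose ``immediate'' equivalences may themselves appeal to \cite{BCE}.

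Second, the crux of your proof is the implication ``$\gamma$ completely alternating $\Rightarrow$ $(e^{-s\gamma_n})_n$ completely monotone for every $s>0$.''  This is exactly the discrete Schoenberg correspondence on the semigroup $\mathbb{N}_0$ recorded in \cite[Chapter~4]{BCR}, and in fact \cite{BCE} proves the lemma by that route.  Your derivation---representing $\gamma$ via Hausdorff applied to $\Delta\gamma$, interpolating to a Bernstein function $g$, using that $e^{-sg}$ is completely monotone on $(0,\infty)$, and transferring to the integer points via the mean-value theorem for finite differences---is a valid and self-contained substitute, if slightly more elaborate than quoting \cite{BCR} directly.  The interchange of $\nabla^k$ with the Laplace integral is justified as you say (finite linear combination, each term dominated by $e^{-s}s^{p-1}$ since $\gamma_n\ge 1$), and the Frullani variant you sketch at the end is equally legitimate and leads straight to condition~(v) of Theorem~\ref{thm:equivcondtIDcontraction}.
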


Using the definition of $\mathcal{MID}$, and Schur products, one may show that if $W_\alpha$ is $\mathcal{MID}$ then so is $AT(W_\alpha)$. \ We record this and related results in the following.

\begin{lemma} 
(i) (\cite[Corollary 3.6]{BCE}) \ If a contractive weighted shift $W_\alpha$ is $\mathcal{MID}$ then so is $AT(W_\alpha)$. \newline
(ii) (\cite[Theorem 4.13]{BCESC}) \ $AT$ maps the class $\mathcal{MID}$ bijectively onto itself. \newline
(iii) (\cite[Theorem 4.4]{BCESC}) \ Suppose that $W_{\alpha}$ is a contractive weighted shift whose weights $\alpha_j$ approach a limit (as $j \rightarrow \infty$). \ Then $AT(W_{\alpha})$ is $\mathcal{MID}$ if and only if $W_{\alpha}$ is.
\end{lemma}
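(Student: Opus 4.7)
For part (i), the cleanest route is through the characterization of $\mathcal{MID}$ given in Theorem~\ref{thm:equivcondtIDcontraction}, specifically condition (vi): $W_\alpha$ is $\mathcal{MID}$ if and only if the weight sequence $\alpha$ is log completely alternating, meaning that $(\log \alpha_n^2)_{n\geq 0}$ is completely alternating. The weight sequence of $AT(W_\alpha)$ is $\beta_n := \sqrt{\alpha_n \alpha_{n+1}}$, so
$$
\log \beta_n^2 = \tfrac{1}{2}\bigl(\log \alpha_n^2 + \log \alpha_{n+1}^2\bigr).
$$
The class of completely alternating sequences is closed under index-shift and under nonnegative linear combinations (both easy consequences of the Hausdorff-type integral representation), so $(\log \beta_n^2)$ is itself completely alternating, and $AT(W_\alpha)\in \mathcal{MID}$. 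An alternative route is via Theorem~\ref{thm:equivcondtIDcontraction}(iii): express the moment matrices $M_{\tilde\gamma}(0)$ and $M_{\tilde\gamma}(1)$ of $AT(W_\alpha)$ as normalized Schur products built from $M_\gamma(0)$ and $M_\gamma(1)$ (a direct calculation using $\tilde\gamma_n = \sqrt{\gamma_n \gamma_{n+1}/\gamma_1}$), and invoke closure of Schur infinite divisibility under Schur products.

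For part (ii), bijectivity splits into injectivity and surjectivity of $AT$ on $\mathcal{MID}$. The identity $\alpha_n\alpha_{n+1}=\beta_n^2$ lets us invert the map at the level of weighted shifts by freely choosing $\alpha_0$ and then recursively setting $\alpha_{n+1} = \beta_n^2/\alpha_n$; so the inversion is a one-parameter family of weighted shifts. The task is to show that, on the subclass $\mathcal{MID}$, this parameter is uniquely determined. I would try to show: among all such preimages, only one has weights-squared whose logarithms form a completely alternating sequence. The natural tool is a growth/asymptotic argument, since completely alternating sequences have tightly constrained long-run behaviour via their integral representation; this should pin down $\alpha_0$. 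Surjectivity then follows by constructing the preimage explicitly from the admissible $\alpha_0$ and verifying, again via Theorem~\ref{thm:equivcondtIDcontraction}(vi), that it is $\mathcal{MID}$.

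For part (iii), the limit hypothesis $\alpha_n \to L$ collapses the one-parameter ambiguity present in (ii). Indeed, if $W_\beta = AT(W_\alpha)$ then $\beta_n \to L$ as well, and iterating the recursion $\alpha_{n+1} = \beta_n^2/\alpha_n$ from an ansatz $\alpha_0$ produces an oscillation whose damping to the common limit forces a single admissible value of $\alpha_0$. Once uniqueness is in hand, the forward and backward implications are symmetric applications of (i) and of the inversion construction, each verified through the log-completely-alternating characterization.

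The main obstacle is the uniqueness built into part (ii): showing that among all weighted-shift preimages of a given $W_\beta \in \mathcal{MID}$, exactly one lies in $\mathcal{MID}$, \emph{without} any limit hypothesis on $\beta$. Equivalently, one must rule out nontrivial ``oscillating'' perturbations of $\alpha_0$ that still produce a log completely alternating sequence. I expect this to require either a direct analysis of how the two-term recursion $\log\alpha_{n+1}^2 = 2\log\beta_n^2 - \log\alpha_n^2$ interacts with complete alternation, or, equivalently, a uniqueness result for Berger measures of Schur-$p$ powers that excludes the unwanted branches.
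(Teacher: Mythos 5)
First, a point of reference: the paper does not prove this lemma at all --- all three parts are imported from \cite[Corollary 3.6]{BCE} and \cite[Theorems 4.13 and 4.4]{BCESC}, with only the one-sentence indication preceding the statement that (i) ``may be shown using the definition of $\mathcal{MID}$ and Schur products.'' Your argument for (i) is complete and correct by either of your two routes. The log-completely-alternating route works directly from the definition: complete alternation is a family of inequalities $(\nabla^n a)_k \le 0$ holding for \emph{all} $k$, so closure under index shift and nonnegative combinations is immediate, no integral representation needed, and $\log \beta_n^2 = \frac{1}{2}(\log\alpha_n^2 + \log\alpha_{n+1}^2)$ finishes it. Your alternative route, via $\tilde\gamma_n = \sqrt{\gamma_n\gamma_{n+1}/\gamma_1}$ and Schur products of Schur powers of $M_\gamma(0)$ and $M_\gamma(1)$, is exactly the paper's intended argument. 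One caveat: your primary route invokes Theorem \ref{thm:equivcondtIDcontraction}(vi), a main result of this paper proved only in Section \ref{main}; this is logically harmless here (that proof does not use the present lemma), but it inverts the paper's order of dependence, whereas the Schur-product route avoids the issue entirely.

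Parts (ii) and (iii), however, are only proof plans, and the gap you yourself flag is genuine and is the entire content of (ii): surjectivity, i.e., that the \emph{inverse} Aluthge transform of an $\mathcal{MID}$ shift is again $\mathcal{MID}$. Unwinding $\log\alpha_{n+1} = 2\log\beta_n - \log\alpha_n$ gives $\log\alpha_n = (-1)^n\log\alpha_0 + 2\sum_{j=0}^{n-1}(-1)^{n-1-j}\log\beta_j$, and you supply no argument that \emph{any} choice of $\alpha_0$ makes this sequence log completely alternating; that is where the real work of \cite[Theorem 4.13]{BCESC} lies. By contrast, the uniqueness half is easier than your ``growth/asymptotic'' sketch suggests: any $\mathcal{MID}$ shift is subnormal, hence hyponormal, so its weights are increasing and bounded and automatically converge to some $L>0$ --- no extra hypothesis needed. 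Two shift preimages of the same $W_\beta$ satisfy $\alpha_n' = c^{(-1)^n}\alpha_n$, so $\alpha_{2n}' \to cL$ and $\alpha_{2n+1}' \to L/c$, and convergence of the monotone sequence $(\alpha_n')$ forces $cL = L/c$, i.e., $c=1$. The same observation shows that (iii) reduces to (ii): if $AT(W_\alpha) \in \mathcal{MID}$, the unique $\mathcal{MID}$ preimage $W_{\alpha'}$ satisfies $\alpha_0' = c\alpha_0$ with the alternating pattern above, and the hypothesis $\alpha_n \to L$ kills $c$; so your outline of (iii) is right but inherits the unproven surjectivity. As it stands, the proposal establishes (i) only.
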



\subsection{$k$--hyponormality} \label{khypon}

For the reader's convenience, we sketch the $k$--hyponormality approach to subnormality, and give a brief version of this background (see \cite{Cu1} and \cite{CF1} for a full discussion and some of the beginnings of this substantial study). \ It is the Bram-Halmos characterization of subnormality (see \cite{Br}) that an operator $T$ is subnormal if and only if, for every $k = 1, 2, \ldots$, a certain $(k+1) \times (k+1)$ operator matrix $A_n(T)$ is positive. \ For $k \ge 1$, an operator is $k$--hyponormal if this positivity condition holds for $k$. \ For weighted shifts, it is well-known from \cite[Theorem 4]{Cu} that $k$--hyponormality reduces to the positivity, for each $n$, of the $(k+1) \times (k+1)$ Hankel moment matrix $M_{\gamma}(\ell,k)$, where
$$
M_{\gamma}(\ell,k) = \left(
\begin{array}{cccc}
\gamma _{n} & \gamma _{n+1} & \cdots & \gamma _{n+k} \\
\gamma _{n+1} & \gamma _{n+2} & \cdots & \gamma _{n+k+1} \\
\vdots & \vdots & \ddots & \vdots \\
\gamma _{n+k} & \gamma _{n+k+1} & \cdots & \gamma _{n+2k}%
\end{array}
\right). 
$$


\subsection{$n$--contractivity}

Another approach to subnormality, this time for a contractive operator $T$ (that is, $\|T \| \leq 1$), is the Agler-Embry characterization based on the notion of $n$--contractivity. \  For $n \ge 1$, an operator is $n$\textit{--contractive} if
\begin{equation} \label{eq11}
A_n(T):=\sum_{i=0}^n (-1)^i \binom{n}{i} {T^*}^i T^i \geq 0 \quad \; \textrm{(cf. \cite{Ag})}.
\end{equation}
A contractive operator is subnormal if and only if it is $n$--contractive for all positive integers $n$ (cf. \cite{Ag}). \ It is well-known, and follows easily from ${W_\alpha^*}^i W_\alpha^i$ being diagonal, that for a weighted shift it suffices to test this condition on basis vectors and that a weighed shift is $n$--contractive if and only if
\begin{equation} \label{cond100}
\sum_{i=0}^n (-1)^i \binom{n}{i} \gamma_{k+i} \geq 0, \qquad k = 0, 1, \ldots.
\end{equation}
Given a sequence $a = (a_j)_{j=0}^\infty$, let $\nabla$ (the forward difference operator) be defined by
$$
(\nabla a)_j := a_j - a_{j+1},
$$
and the iterated forward difference operators $\nabla^{n}$ by
$$
\nabla^{0} a := a \; \; \textrm{ and } \; \; \nabla^{n} := \nabla (\nabla^{n-1}),
$$
for $n \geq 1$. \ For instance,
\begin{equation} \label{nabla2}
(\nabla^2a)_j=a_j-2a_{j+1}+a_{j+2} \; \; (j \ge 0).
\end{equation}
To ease the notation in some settings, set, for any $n \geq 1$ and $k \geq 0$, 
$$
T_a(n,k) := (\nabla^{n} a)_k = \sum_{i=0}^n (-1)^i \binom{n}{i} a_{i+k}
$$
(where we write simply $T(n,k)$ if no confusion as to the sequence $a$ will arise). \ With a slight abuse of language say that a sequence $a$ is $n$--contractive if $T_a(n,k) \geq 0$ for all $k = 0, 1, \ldots$ . \  

There is alternative language for these and related notions:  a sequence $a$ is $n$\textit{--monotone} if $T_a(n,k) = (\nabla^{n} a)_k \geq 0$ for all $k = 0, 1, \ldots$, $n$\textit{--hypermonotone} if it is $j$--monotone for all $j = 1, \ldots, n$, and \textit{completely monotone} if it is $n$--monotone for all $n = 1, 2, \ldots$. \  \label{defn} Similarly, a sequence is $n$\textit{--alternating} if $T_a(n,k) = (\nabla^{n} a)_k \leq 0$ for all $k = 0, 1, \ldots$, $n$\textit{--hyperalternating} if it is $j$--alternating for all $j = 1, \ldots, n$, and \textit{completely alternating} if it is $n$--alternating for all $n = 1, 2, \ldots$. \  A sequence is $n$\textit{--log monotone} (respectively, \textit{completely log monotone}, $n$\textit{--log alternating}, \textit{completely log alternating}) if the sequence $(\ln a_j)$ is $n$--monotone (respectively, completely monotone, $n$--alternating, completely alternating). (Notice that we define log monotonicity using $n$--monotonicity, so we allow for the sequence to possibly be negative, as in the case of $\ln a$, where $a$ is the sequence of moments of a contractive weighted shift. \ Thus, we allow slightly more generality than in the definition given in \cite[Definition 6.1]{BCR}.)   


\subsection{$n$--hypermonotone and $n$--hyperalternating weighted shifts}

\begin{proposition}\label{nXvsnhyperX}
Suppose $n \in \mathbb{N}$ and $(a_k)_{k=0}^\infty$ is a sequence of real numbers, and in addition, $\lim_{k \rightarrow \infty} a_k$ exists.
\begin{enumerate}
                     \item If $(a_k)_{k=0}^\infty$ is $n$--monotone, it is $n$--hypermonotone.
                     \item If $(a_k)_{k=0}^\infty$ is $n$--log monotone, it is $n$--log hypermonotone.
                     \item If $(a_k)_{k=0}^\infty$ is $n$--alternating, it is $n$--hyperalternating.
                     \item If $(a_k)_{k=0}^\infty$ is $n$--log alternating, it is $n$--log hyperalternating.
                   \end{enumerate}
\end{proposition}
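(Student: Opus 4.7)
The plan is to prove (1) directly by a downward induction, derive (3) by applying (1) to $(-a_k)$, and obtain (2) and (4) by applying (1) and (3) respectively to the sequence $(\ln a_k)$.

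For (1), I would first record the elementary ``limit-goes-to-zero'' fact: since
\[
(\nabla^j a)_k = \sum_{i=0}^{j} (-1)^i \binom{j}{i}\, a_{i+k}
\]
has coefficients summing to $(1-1)^j = 0$, whenever $a_k \to L$ (finite) as $k \to \infty$ one gets $(\nabla^j a)_k \to L \cdot 0 = 0$ for every fixed $j \geq 1$. I would then argue by downward induction on $j$ from $n$ to $1$ that $(\nabla^j a)_k \geq 0$ for all $k \geq 0$. The base case $j = n$ is the $n$--monotonicity hypothesis. For the inductive step, $(\nabla^{j+1} a)_k \geq 0$ combined with the telescoping identity $(\nabla^j a)_k - (\nabla^j a)_{k+1} = (\nabla^{j+1} a)_k$ shows that the sequence $k \mapsto (\nabla^j a)_k$ is non-increasing; since it also tends to $0$ by the preliminary step, every term must be non-negative, giving $j$--monotonicity. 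Iterating from $j = n$ down to $j = 1$ yields $n$--hypermonotonicity. Part (3) follows by the sign-flipped mirror argument applied to $(-a_k)$, under which ``non-increasing'' becomes ``non-decreasing.''

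The main subtlety I expect is the passage from (1) and (3) to (2) and (4): applying (1) to $(\ln a_k)$ genuinely requires $\lim (\ln a_k)$ to exist and be finite, i.e., $\lim a_k \in (0,\infty)$. When $\lim a_k = 0$ this is false in the strict sense, and one must work more carefully. A reasonable workaround is to replace the ``limit $= 0$'' step with the weaker ratio hypothesis $a_{k+1}/a_k \to 1$: this forces $b_k := \nabla(\ln a)_k = \ln(a_k/a_{k+1})$ to tend to $0$, which by the same finite-sum argument propagates to $(\nabla^{j-1} b)_k = (\nabla^j \ln a)_k \to 0$ for every $j \geq 1$, after which the downward induction proceeds verbatim. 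In the principal applications of this proposition within the paper---moment sequences of hyponormal contractive weighted shifts whose weights tend to $1$---this weaker condition holds automatically, so in practice no strengthening of the hypothesis is needed.
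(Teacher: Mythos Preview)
Your argument for (i) is essentially the paper's: both use the telescoping identity $(\nabla^{j+1}a)_k = (\nabla^j a)_k - (\nabla^j a)_{k+1}$ together with $(\nabla^j a)_k \to 0$; the paper phrases this by contradiction (if $(\nabla^{m-1}a)_{k_0} = \delta < 0$ then $(\nabla^{m-1}a)_k \le \delta$ for all $k \ge k_0$, contradicting the zero limit), while you run the same step as a direct downward induction. The content is identical.

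Your remark on (ii) and (iv) is well taken and goes beyond what the paper says. The paper simply asserts the other three cases are ``similar,'' but applying (i) to $(\ln a_k)$ does require $\lim \ln a_k$ to be finite, i.e.\ $\lim a_k > 0$; the paper tacitly acknowledges this by adding the hypothesis of a non-zero weight limit in the subsequent Corollary on $n$--log-monotone moment sequences. Your proposed workaround via $a_{k+1}/a_k \to 1$ is a clean way to salvage the argument in the boundary case, and is exactly the kind of condition that holds in the applications of interest.
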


\begin{proof}
We prove only the first assertion as the other proofs are similar. \ If $n = 1$ there is nothing to prove, so assume $n > 1$. \ Suppose for a contradiction that  $(a_k)_{k=0}^\infty$ is $n$--monotone but not $n$--hypermonotone, and let $m$ be the least integer, $1 < m \leq n$, such that the sequence is $m$--monotone but not $(m-1)$--monotone. \  For each $j$ and $k$, recall that $T(j,k)$ is defined by
$$
T(j,k) = \sum_{i=0}^j (-1)^i \binom{j}{i} a_{k+i}.
$$
The positivity of $T(j,k)$  for all $k$ yields $j$--monotonicity. \  Further, it is routine that, for any $j$ and $k$,
\begin{equation}  \label{eq:recAnk}
T(j+1,k) = T(j,k) - T(j,k+1).
\end{equation}
Suppose $k_0$ to be the least $k$ such that
$$T(m-1,k_0) = \delta < 0.$$
Then
$$0 \leq T(m,k_0) = T(m-1,k_0) - T(m-1,k_0+1)$$
forces
$$T(m-1,k_0+1) \leq \delta.$$
Repeating the argument, we have
\begin{equation} \label{ineq:negative}
T(m-1,k) \leq \delta < 0, \quad k \geq k_0.
\end{equation}
If $\lim_{k \rightarrow \infty} a_k = L$, then it is elementary that
$$
\lim_{k \rightarrow \infty} T(m-1,k) = \sum_{i=0}^{m-1} (-1)^i \binom{m-1}{i} L = 0,
$$
which is a contradiction of \eqref{ineq:negative}. \qed
\end{proof}

Observe that in light of the Dirichlet shift $D$ (that is, the standard $2$--isometry whose moment sequence is therefore $2$--monotone but not $1$--monotone), some assumption like existence of the limit of the weight sequence is required.

\begin{corollary}  \label{contnmonoimpnhypermono}
Suppose $(\gamma_k)_{k=0}^\infty$ is the sequence of moments of a weighted shift. \  If the weighted shift is contractive, and $(\gamma_k)_{k=0}^\infty$ is $n$--monotone for some $n \in \mathbb{N}$, then $(\gamma_k)_{k=0}^\infty$ is $n$--hypermonotone. \  Alternatively, if the sequence $(\gamma_k)_{k=0}^\infty$ is both $1$--monotone and $n$--monotone for some $n \in \mathbb{N}$, then the sequence is $n$--hypermonotone.
\end{corollary}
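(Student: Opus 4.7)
The plan is to deduce this directly from Proposition \ref{nXvsnhyperX}(i) by verifying its hypothesis (existence of $\lim_{k\to\infty} \gamma_k$) in each of the two scenarios.

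For the first scenario, I would observe that contractivity of $W_\alpha$ means $\|W_\alpha\| = \sup_k \alpha_k \le 1$, so $\alpha_k^2 \le 1$ for every $k$. Since $\gamma_{k+1} = \alpha_k^2 \gamma_k$, the moment sequence is non-increasing. Being a sequence of positive reals, it is bounded below by $0$, and hence converges. Proposition \ref{nXvsnhyperX}(i) then immediately yields $n$--hypermonotonicity.

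For the alternative scenario, 1-monotonicity of $(\gamma_k)$ means $(\nabla \gamma)_k = \gamma_k - \gamma_{k+1} \geq 0$, which is exactly non-increasingness. Combined with positivity (moments of a weighted shift are products of squared weights, hence strictly positive), the monotone convergence theorem gives that $\lim_{k \to \infty} \gamma_k$ exists. Again Proposition \ref{nXvsnhyperX}(i) applies and delivers $n$--hypermonotonicity.

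There is essentially no obstacle here; the content of the corollary is simply that the limit hypothesis of Proposition \ref{nXvsnhyperX} is automatic under either of the two sufficient conditions given. The only thing to be careful about is noticing that in the weighted shift setting one gets $1$--monotonicity of moments \emph{for free} from contractivity (not from hyponormality, which controls the weights but not directly the moments in the right direction), so each hypothesis stands on its own without appealing to further structure of $W_\alpha$.
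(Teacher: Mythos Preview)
Your proposal is correct and matches the paper's own proof essentially verbatim: the paper's argument is the single sentence ``In either case the sequence $(\gamma_k)$ is decreasing and bounded below by zero and so has a limit,'' after which Proposition~\ref{nXvsnhyperX}(i) is implicitly invoked. You have simply spelled out why each hypothesis forces the moment sequence to be non-increasing, which is exactly the content the paper leaves to the reader.
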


\begin{proof}
In either case the sequence $(\gamma_k)$ is decreasing and bounded below by zero and so has a limit. \qed
\end{proof}

This transfers to the $n$--log-monotone and $n$--log-hypermonotone case, at least in situations of interest to us.

\begin{corollary}   \label{cor:nmonoimpnhypermonoshifts}
Suppose $(\gamma_k)_{k=0}^\infty$ is the sequence of moments of a (bounded) \linebreak weighted shift $W_{\alpha}$. \  If $W_{\alpha}$ is contractive, and its weights have a non-zero limit (in particular, if $W_{\alpha}$ is hyponormal), then for any $n \in \mathbb{N}$, if $(\gamma_k)_{k=0}^\infty$ is $n$--log-monotone, then it is $n$--log-hypermonotone. \  Alternatively, if the shift is hyponormal or its weights have a non-zero limit, and if the sequence $(\gamma_k)_{k=0}^\infty$ is both $1$--log-monotone and $n$--log-monotone for some $n \in \mathbb{N}$, then the sequence is $n$--log-hypermonotone.
\end{corollary}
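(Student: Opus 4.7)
The plan is to reduce the statement to Proposition \ref{nXvsnhyperX} applied not to $(\ln \gamma_k)$, but to $(\ln \alpha_k)$, which has a finite limit under our hypotheses even when $(\ln \gamma_k)$ does not.

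First I would unify the two versions of the hypothesis into a single working setup: $W_\alpha$ is contractive and $(\alpha_k)$ admits a positive limit $L \in (0,1]$. In the first form this is immediate. In the alternative form, $1$--log--monotonicity of $\gamma$ is exactly $\alpha_k \le 1$, which gives contractivity; and the nonzero limit is either assumed outright or, in the hyponormal case, obtained from $(\alpha_k)$ being nondecreasing and bounded above by $1$, with $\alpha_0 > 0$ forcing $L > 0$. In particular, $\ln \alpha_k \to \ln L$, which is finite.

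Next I would record the key difference identity
$$
(\nabla^j \ln \gamma)_k = -2\,(\nabla^{j-1} \ln \alpha)_k \quad (j \ge 1),
$$
which follows by iteration from $(\nabla \ln \gamma)_k = \ln(\gamma_k/\gamma_{k+1}) = -2\ln \alpha_k$. For $n = 1$ the conclusion is trivial. For $n \ge 2$, this identity converts the hypothesis ``$\gamma$ is $n$--log--monotone'' into ``$\ln \alpha$ is $(n-1)$--alternating.'' Since $\ln \alpha$ has the finite limit $\ln L$, Proposition \ref{nXvsnhyperX}(iii) then upgrades this to ``$\ln \alpha$ is $(n-1)$--hyperalternating.'' Translating back via the same identity gives $j$--log--monotonicity of $\gamma$ for each $2 \le j \le n$; together with $1$--log--monotonicity (contractivity), this is exactly $n$--log--hypermonotonicity of $\gamma$.

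The main obstacle, and the reason for the detour through $(\ln \alpha_k)$ rather than a direct invocation of Proposition \ref{nXvsnhyperX}(ii) on $(\ln \gamma_k)$, is that whenever $L < 1$ the moments $\gamma_k$ tend to $0$ geometrically, so $\ln \gamma_k \to -\infty$, and the limit computation $\lim_k T(m-1,k) = L \cdot \sum_i (-1)^i \binom{m-1}{i} = 0$ that drives the proof of Proposition \ref{nXvsnhyperX} becomes an indeterminate form. Exchanging $(\ln \gamma_k)$ for $(\ln \alpha_k)$ trades this divergent sequence for one with a genuinely finite limit, and that substitution is the essential step of the corollary beyond routine translation between $\alpha$-- and $\gamma$--side properties.
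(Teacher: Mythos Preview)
Your argument is correct, and the underlying idea is the same as the paper's: both rely on the identity $(\nabla \ln\gamma)_k = -2\ln\alpha_k$, which is exactly what the paper means by ``combining the terms using rules of logs.'' The packaging differs slightly. The paper works directly with $(\ln\gamma_k)$ and, since that sequence need not have a finite limit, instead checks that the expressions $T_{\ln\gamma}(j,k)$ themselves have limits (so that the contradiction in the proof of Proposition~\ref{nXvsnhyperX} still fires). You instead transfer the problem to $(\ln\alpha_k)$, which genuinely has a finite limit, and invoke Proposition~\ref{nXvsnhyperX}(iii) as a black box on the alternating side before translating back. Your route is arguably cleaner---it avoids reopening the proof of Proposition~\ref{nXvsnhyperX} and sidesteps the indeterminate-form issue you correctly flag---while the paper's route is terser but asks the reader to revisit that proof with the weakened hypothesis ``$T(m-1,k)$ has a limit'' in place of ``$a_k$ has a limit.''
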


\begin{proof}
One applies the previous corollary to the sequence $(\log \gamma_k)$, and the only thing to check is that the given conditions yield that the resulting expressions $T(n,k)$ for the sequence of logs actually have limits, which is straightforward combining the terms using rules of logs. \qed
\end{proof}

We leave to the interested reader the appropriate versions of the corollaries for the $n$--alternating and $n$--log alternating case.

Let us record some results on product sequences. \   It is trivial that the Schur (term-wise) product of two $n$--log monotone sequences is again $n$--log monotone (and, in fact, the Schur (term-wise) product of two $n$--log alternating sequences is again $n$--log alternating). \  The good thing holds for $n$--monotone sequences as well.

\begin{lemma}
Suppose $(a_k)_{k=0}^\infty$ and $(b_k)_{k=0}^\infty$ are $n$--hypermonotone sequences. \linebreak \ Then  $(a_k b_k)_{k=0}^\infty$ is $n$--hypermonotone. \  In particular, if they are moment sequences arising from a contractive weighted shift, and if each is both $1$--monotone and $n$--monotone, it follows that  $(a_k b_k)_{k=0}^\infty$ is $n$--hypermonotone.
\end{lemma}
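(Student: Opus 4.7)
The plan is to prove a discrete Leibniz identity for iterated forward differences of a Schur product, and then read positivity off term by term. Concretely, the key identity I want to establish is
\[
(\nabla^{m}(ab))_{k} \;=\; \sum_{j=0}^{m} \binom{m}{j}\,(\nabla^{m-j}a)_{k+j}\,(\nabla^{j}b)_{k} \qquad (m \ge 1,\ k \ge 0).
\]
The cleanest derivation is operator-algebraic: let $E$ denote the forward shift on sequences and observe that $E$ commutes with Schur multiplication, i.e., $E(ab)=(Ea)(Eb)$. Writing $\nabla=I-E$ and acting on $ab$, one factors $I - E_aE_b = (I-E_a) + E_a(I-E_b) = \nabla_a + E_a\nabla_b$, where the subscript indicates which factor each operator touches. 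Since the partial operators $\nabla_a, E_a, \nabla_b, E_b$ commute pairwise, the binomial theorem yields $\nabla^m = \sum_{j=0}^m \binom{m}{j}\,\nabla_a^{\,m-j}E_a^{\,j}\nabla_b^{\,j}$; applying to $ab$ and evaluating at $k$ gives the displayed identity. Equivalently, one can induct on $m$ starting from the base case $\nabla(ab)_k = (\nabla a)_k\,b_k + a_{k+1}(\nabla b)_k$.

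Given the identity, the first assertion is essentially immediate. Under the standing positivity of the moment sequences and the $n$-hypermonotonicity of $(a_k)$ and $(b_k)$, every factor $(\nabla^{m-j}a)_{k+j}$ and $(\nabla^{j}b)_k$ on the right-hand side is nonnegative for each $m \in \{1,\ldots,n\}$ and $0 \le j \le m$: the endpoint terms $j=0$ and $j=m$ use nonnegativity of $b$ and $a$ respectively, while the interior terms invoke $(m-j)$- and $j$-monotonicity of $a$ and $b$, both available since $m \le n$. Hence $(\nabla^{m}(ab))_k \ge 0$ for $m = 1,\ldots,n$ and every $k$, which is exactly $n$-hypermonotonicity of $ab$. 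For the ``In particular'' clause, the second half of Corollary \ref{contnmonoimpnhypermono} upgrades the joint hypothesis ``$1$-monotone and $n$-monotone'' on each of $(a_k)$ and $(b_k)$ separately to ``$n$-hypermonotone,'' after which the first part of the lemma applies.

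There is no genuine obstacle here: the argument is essentially bookkeeping. The single substantive input is the (trivial) observation that the shift $E$ commutes with Schur products, and the only item requiring care is getting the index shifts correct --- the factor $E_a^{\,j}$ produced by the binomial expansion is precisely what places the argument of $\nabla^{m-j}a$ at $k+j$ rather than at $k$, and this is exactly the offset needed for each summand to be recognizable as a product of quantities nonnegative by hypothesis.
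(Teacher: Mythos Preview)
Your proof is correct and follows essentially the same approach as the paper: both invoke the Leibniz rule for iterated forward differences, $\nabla^{m}(ab)_k = \sum_{j=0}^{m}\binom{m}{j}(\nabla^{m-j}a)_{k+j}(\nabla^{j}b)_k$, and then read off nonnegativity term by term, with the ``in particular'' clause handled via Corollary~\ref{contnmonoimpnhypermono}. Your operator-algebraic derivation of the identity is more explicit than the paper's, which simply cites the rule, but the substance is the same; note that both arguments tacitly use nonnegativity of the sequences themselves (the $j=0$ and $j=m$ endpoint terms), which you at least flag.
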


\begin{proof}
The first claim follows immediately from Leibniz's rule of differences:  if $\nabla$ is the usual forward difference operator on sequences, then
\begin{equation} \label{eq:LeibnizRule}
\nabla^{n} (a_k b_k) = \sum_{j=0}^n \binom{n}{j} (\nabla^{n-j}a_{k+j})(\nabla^{j}b_k).
\end{equation}
Since the coefficients are positive and the differences appearing are non-negative, we have the result. \  The second claim follows because we have shown that, in the contractive case, $n$--hypermonotonicity and $n$--monotonicity coincide. \qed
\end{proof}

We end this section with an example where the complete alternating property is crucial in the proof of $\mathcal{MID}$.

\begin{example} (\cite[Example 3.5]{BCE}) \ The weighted shift with weights the sequence $(\alpha_n)$ defined by
$$\alpha_n = 1 + \frac{1}{2} + \frac{1}{3} + \ldots + \frac{1}{n+1} - \ln (n+2), \hspace{.2in}n=0,1, \ldots,$$
or the shift with weights the square roots of these, is $\mathcal{MID}$ and subnormal. \  This is because this sequence is completely alternating (and it increases to the Euler constant $\gamma \approx 0.577\ldots $).
\end{example}
 

\subsection{Conditionally positive definite matrices}

Throughout this paper, and as is common in the literature, the expression positive definite is to be understood as positive semi-definite; that is, a $k \times k$ matrix $A$ is \textit{positive definite} (in symbols, $A \ge 0$) if $A$ is Hermitian and $\bm{v}^{\ast}A\bm{v} \ge 0$, for all vectors $\bm{v} \in \mathbb{C}^k$.  \ Since we are mainly interested in sequences of real numbers, and matrices built from those sequences, we will focus our discussion on matrices over $\mathbb{R}$. \ Amongst canonical examples of $k \times k$ positive matrices we mention the Gram matrices $\bm{v} \bm{v}^{\ast}$, the Hilbert matrix $\left(\ddfrac{1}{i+j-1} \right)_{i,j=1}^k$, the Pascal matrix $\left(\binom{i+j}{i} \right)_{i,j=1}^k$, the Lehmer matrix $\left(\ddfrac{\min \{i,j\}}{\max \{i,j\}} \right)_{i,j=1}^k$, and the GCD matrix $\left(g.c.d.(i,j) \right)_{i,j=1}^k$. \ These matrices have the added property of remaining positive after taking the $p$--th Schur power, for all $p>0$; that is, they are infinitely divisible matrices (cf. \cite{Bh}).    

\begin{remark}
\rm{
It is easy to see that all positive definite $2 \times 2$ matrices are infinitely divisible. \ However, the same is not true of $3 \times 3$ matrices, witness the matrix 
$$
R(a):=\left( 
\begin{array}{ccc}
1 & 1 & a \\
1 & 2 & 1 \\
a & 1 & 1
\end{array}
\right) .
$$
$R(a)$ is positive definite for all $a \in [0,1]$, but $\det R(a)^{1/2}<0$ for small positive values of $a$.
}
\end{remark}

We briefly pause to give a precise definition of conditional positive definiteness; cf. \cite[p. 66]{BCR}. 

\begin{definition}
A Hermitian $k \times k$ complex matrix $A \equiv (a_{ij})_{i,j=0}^k$ (i.e., $a_{ji}=\overline{a}_{ij}$ for all $i,j=0,\ldots,k$) is \textit{conditionally positive definite} (abbreviated CPD) whenever $\bm{c}^{\ast}A\bm{c}:=\sum_{i,j=0}^k a_{ij}\bar{c}_ic_j \ge 0$ for all complex vectors $\bm{c}\equiv(c_0,c_1,\ldots,c_k)$ such that $c_0+c_1+\ldots+c_k=0$. \ An \textit{infinite} (scalar) matrix is CPD if all of its principal minors of finite size are CPD.
\end{definition}

Clearly, a positive definite matrix is CPD. \ When $k=2$, $A$ is CPD if and only if $a_{11}-a_{12}-a_{21}+a_{22} \ge 0$. \ When $k=3$, a straightforward calculation reveals that $A$ is CPD if and only if the $2 \times 2$ matrix
$$
\left(
\begin{array}{cc}
a_{11}-a_{12}-a_{21}+a_{22} \; \; &  a_{12}-a_{13}-a_{22}+a_{23} \\
a_{21}-a_{22}-a_{31}+a_{32} \; \; &  a_{22}-a_{23}-a_{32}+a_{33} 
\end{array}
\right). 
$$
is positive definite. \ A much more general result holds, as we will see below. \ First, we present two elementary observations. 

\begin{remark}
\rm{
Observe that a real Hermitian matrix $A$ is CPD if and only if it satisfies the CPD positivity condition for vectors with real coordinates adding up to zero. \ Here is the argument for the non-trivial direction: given a complex vector $\bm{v}$ whose components sum to zero, we may write $\bm{v} = \bm{x} + {\mathrm{i}}\bm{y}$ where $\bm{x}$ and $\bm{y}$ are real vectors each of whose coordinates must sum to zero. \  Then 
\begin{eqnarray} \label{xiy}
\bm{v}^{\ast} A \bm{v} &=& (\bm{x} + {\mathrm{i}}\bm{y})^{\ast} A (\bm{x} + {\mathrm{i}}\bm{y}) \nonumber \\
 &=& \bm{x}^T A \bm{x} + \bm{y}^T A \bm{y} - {\mathrm{i}}\bm{y}^T A\bm{x} +{\mathrm{i}}\bm{x}^T A \bm{y} \nonumber \\
 &=& \bm{x}^T A \bm{x} + \bm{y}^T A \bm{y} - {\mathrm{i}}(\bm{y}^T A\bm{x} - \bm{x}^T A \bm{y}) \\ 
 &=& \bm{x}^T A \bm{x} + \bm{y}^T A \bm{y}, \nonumber 
\end{eqnarray}
using the fact that $\bm{v}^{\ast} A \bm{v}$ must be real (since $A$ is Hermitian) and therefore the imaginary part of (\ref{xiy}) must be zero. \ If we assume that $A$ satisfies conditional positive definiteness for real vectors, we must have $\bm{x}^T A \bm{x} + \bm{y}^T A \bm{y} \ge 0$, and it follows that $\bm{v}^{\ast} A \bm{v}  \ge 0$, showing that $A$ satisfies conditional positive definiteness for complex vectors.
}
\end{remark}

\begin{remark}
\rm{
(i) \ (cf. \cite[Lemma 4.1.4]{BaRa}) \ If $A$ is CPD, then $A$ has at most one negative eigenvalue. \newline
(ii) \ Assume that a $k \times k$ matrix $A$ is nonzero, has non-positive entries, and is CPD. \ Then $A$ has exactly one negative eigenvalue. \ For, since $A$ is Hermitian, all eigenvalues are real, and not all eigenvalues can be zero. \ Since the trace of $A$ is non-positive, at least one eigenvalue must be negative. \ By (i), at most one eigenvalue can be negative, and therefore $A$ has exactly one negative eigenvalue. \newline
(iii) \ Consider a contractive unilateral weighted shift $W_{\alpha}$, and its moment matrices $M_{\gamma}(0)$ and $M_{\gamma}(1)$, whose entries are real numbers in the interval $(0,1]$ (recall that we have assumed that the weights are positive). \ Then $\log M_{\gamma}(0,k)$ and $\log M_{\gamma}(1,k)$ are nonzero matrices, have non-positive entries, and are CPD. \ It follows from (ii) that each of these matrices has exactly one negative eigenvalue.    
}
\end{remark}

Since we wish to study CPD matrices and weighted shifts, the equivalence of (i) and (ii) in the following result is of special interest to us.

\begin{theorem} \label{thm114}
(cf. \cite[Theorem 4.1.3]{BaRa}, \cite{Bh}, and \cite[Problem 6.3.8]{HJ}) \ For $k \ge 2$, let $A \equiv \left(a_{ij}\right)_{i,j=1}^k$ be a Hermitian $k \times k$ matrix. \ The following statements are equivalent. \newline
(i) \ $A$ is CPD. \newline
(ii) \ $B := \left(a_{ij}-a_{i,j+1}-a_{i+1,j}+a_{i+1,j+1}\right)_{i,j=1}^{k-1}$ is positive definite. \newline
(iii) \ There exist real numbers $\omega_i \; (i=1,\ldots,k)$ such that $C:=\left(a_{ij}-\omega_i-\omega_j \right)_{i,j=1}^k$ is positive definite. \newline
(iv) \ For all $p>0$, the $k \times k$ matrix $e^{pA}$ is positive definite.
\end{theorem}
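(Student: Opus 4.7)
The plan is to prove the theorem by establishing the cycle (i) $\Leftrightarrow$ (ii) and (i) $\Rightarrow$ (iii) $\Rightarrow$ (iv) $\Rightarrow$ (i). Throughout, I will use the paper's convention that ``positive definite'' means ``positive semi-definite'' (i.e., $\bm{v}^{\ast}M\bm{v}\ge 0$), so every implication ultimately reduces to verifying the non-negativity of some quadratic form.

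For (i) $\Leftrightarrow$ (ii), I would set up the linear bijection between the zero-sum vectors $\bm{c}\in\mathbb{C}^{k}$ and arbitrary vectors $\bm{w}\in\mathbb{C}^{k-1}$ given by $c_{1}=w_{1}$, $c_{i}=w_{i}-w_{i-1}$ for $2\le i\le k-1$, and $c_{k}=-w_{k-1}$ (with inverse $w_{i}=c_{1}+\cdots+c_{i}$). A direct Abel-type summation by parts shows $\bm{c}^{\ast}A\bm{c}=\bm{w}^{\ast}B\bm{w}$, from which both directions follow: if $A$ is CPD then $\bm{w}^{\ast}B\bm{w}\ge 0$ for every $\bm{w}$ (so $B\ge 0$), and conversely since every zero-sum $\bm{c}$ arises from some $\bm{w}$. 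This is a routine but careful reindexing.

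For (i) $\Rightarrow$ (iii), I would construct $\omega$ explicitly. Write any $\bm{x}\in\mathbb{R}^{k}$ as $\bm{x}=t\bm{1}+\bm{c}$ with $\bm{1}^{T}\bm{c}=0$, where $t=\tfrac{1}{k}\bm{1}^{T}\bm{x}$. Define $R=\omega\bm{1}^{T}+\bm{1}\omega^{T}$; on zero-sum vectors $R$ acts trivially, so $\bm{c}^{T}(A-R)\bm{c}=\bm{c}^{T}A\bm{c}\ge 0$ by CPD. Expanding $\bm{x}^{T}(A-R)\bm{x}$, the cross term $2t(\bm{1}^{T}A\bm{c}-k\omega^{T}\bm{c})$ vanishes for all zero-sum $\bm{c}$ provided $\bm{1}^{T}A-k\omega^{T}\in\mathrm{span}(\bm{1}^{T})$, which is achieved by choosing $\omega=\tfrac{1}{k}A\bm{1}-\tfrac{c}{k}\bm{1}$ for a scalar $c$. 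The remaining $t^{2}$-coefficient then equals $-\bm{1}^{T}A\bm{1}+2kc$, which is non-negative once $c$ is chosen large. So $C=A-R\ge 0$. The converse (iii) $\Rightarrow$ (i) is immediate: for zero-sum $\bm{c}$, $\bm{c}^{\ast}R\bm{c}=(\omega^{\ast}\bm{c})(\bm{1}^{T}\bm{c})+(\bm{1}^{\ast}\bm{c})(\omega^{T}\bm{c})=0$, so $\bm{c}^{\ast}A\bm{c}=\bm{c}^{\ast}C\bm{c}\ge 0$.

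For (iii) $\Rightarrow$ (iv), interpreting $e^{pA}$ entrywise (Schur exponential, consistent with the paper's use of Schur $\log$), factor
\begin{equation*}
(e^{pA})_{ij}=e^{p\omega_{i}}e^{p\omega_{j}}\cdot e^{pC_{ij}}=(\bm{d}\bm{d}^{T})_{ij}\cdot e^{\circ pC}_{ij},
\end{equation*}
where $d_{i}=e^{p\omega_{i}}$ and $e^{\circ pC}=\sum_{n\ge 0}\tfrac{p^{n}}{n!}C^{\circ n}$ is the Schur exponential series. Since $C\ge 0$, the Schur theorem gives $C^{\circ n}\ge 0$ for every $n$, so $e^{\circ pC}\ge 0$, and its Schur product with the rank-one PSD matrix $\bm{d}\bm{d}^{T}$ is again PSD. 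Finally, for (iv) $\Rightarrow$ (i), given a zero-sum $\bm{c}$ expand
\begin{equation*}
\bm{c}^{\ast}e^{pA}\bm{c}=\sum_{n=0}^{\infty}\frac{p^{n}}{n!}\sum_{i,j}\overline{c_{i}}c_{j}a_{ij}^{n}.
\end{equation*}
The $n=0$ term is $|\sum_{i}c_{i}|^{2}=0$, so dividing by $p>0$ and letting $p\to 0^{+}$ yields $\bm{c}^{\ast}A\bm{c}\ge 0$. The main obstacle is organizing the construction of $\omega$ in (i) $\Rightarrow$ (iii) cleanly; the other implications are essentially algebraic bookkeeping or standard applications of Schur's theorem. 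I would cite \cite[Theorem 4.1.3]{BaRa} and \cite[Problem 6.3.8]{HJ} for the background and present the argument in the self-contained style above.
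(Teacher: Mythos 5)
Your proof is correct, but note that the paper does not actually prove this theorem: it is stated with citations to Bapat--Raghavan, Bhatia, and Horn--Johnson, and no argument is given in the text. What you have written is, in essence, the standard proof those references contain: the change of variables $c_i = w_i - w_{i-1}$ (with $w_0 = w_k = 0$) turning $\bm{c}^{\ast}A\bm{c}$ into $\bm{w}^{\ast}B\bm{w}$ for (i)$\Leftrightarrow$(ii); the shift $A \mapsto A - \omega\bm{1}^T - \bm{1}\omega^T$ for (iii); the Schur product theorem applied to the Schur exponential series together with the rank-one factor $\bm{d}\bm{d}^{T}$ for (iii)$\Rightarrow$(iv); and the first-order expansion $\bm{c}^{\ast}e^{pA}\bm{c} = p\,\bm{c}^{\ast}A\bm{c} + O(p^2)$ on zero-sum vectors, using that the $n=0$ term is $\bigl|\sum_i c_i\bigr|^2 = 0$, for (iv)$\Rightarrow$(i). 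I checked each computation, including the $t^2$-coefficient $-\bm{1}^T A\bm{1} + 2kc$ in your construction of $\omega$, and all steps are sound. So you supply a complete self-contained proof where the paper outsources one; the approaches are not in conflict.

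One caveat you should make explicit: your proof of (i)$\Rightarrow$(iii) is written with transposes and real vectors, i.e., for real symmetric $A$, whereas the theorem is stated for Hermitian $A$ with \emph{real} $\omega_i$. This is not a gap in your argument but a defect in the statement: for genuinely complex Hermitian $A$, (iii) with real $\omega_i$ is false. Indeed, take
\begin{equation*}
A = \begin{pmatrix} 0 & \mathrm{i} \\ -\mathrm{i} & 0 \end{pmatrix},
\end{equation*}
which is CPD (here $a_{11}-a_{12}-a_{21}+a_{22}=0$) and satisfies (ii) and (iv), yet for any real $\omega_1, \omega_2$ one computes
\begin{equation*}
\det\left(a_{ij}-\omega_i-\omega_j\right)_{i,j=1}^{2} = 4\omega_1\omega_2 - 1 - (\omega_1+\omega_2)^2 \leq -1 < 0,
\end{equation*}
since $(\omega_1+\omega_2)^2 \geq 4\omega_1\omega_2$. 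The correct Hermitian version uses complex $\omega_i$ and entries $a_{ij}-\omega_i-\overline{\omega}_j$ (e.g., $\omega_i = a_{i1} - a_{11}/2$). The paper declares that it works over $\mathbb{R}$, so your tacit restriction matches the intended setting, but a careful write-up should say so, and should note that for real symmetric matrices it suffices to verify both positive definiteness and the CPD condition on real vectors (the paper's own remark on $\bm{v} = \bm{x} + \mathrm{i}\bm{y}$ handles this).
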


\begin{remark} \label{rem115} 
\rm{
Assume that the matrix $A$ in Theorem \ref{thm114} is Hankel, so the entries are of the form $a_{ij}=w_{i+j}$. \ Then the entries of the matrix $B$ in {\it (ii)} are of the form $w_{i+j}-2w_{i+j+1}+w_{i+j+2}$. \ This sum is precisely what one obtains by letting the square of the forward difference operator $\nabla$ act on $w$ (cf. (\ref{nabla2})). \ We'll revisit this viewpoint in Subsection \ref{condition}. 
}
\end{remark}

We now establish a link between conditional positive definiteness and unilateral weighted shifts; cf. Lemma \ref{lem22}.

\begin{remark}
\rm{
Let $A$ be a $k \times k$ matrix with positive entries, and assume that -A is CPD. \ Then the $k \times k$ matrix $R$ of reciprocals, defined as $r_{ij}:=\ddfrac{1}{a_{ij}}$, is infinitely divisible \cite[Subsection 6.3, Problem 11]{HJ}.  
}
\end{remark}
    
We now present a proposition crucial for our purposes, which R. Bhatia \cite{Bh} attributes to C. Loewner and which may be found in \cite{HJ}.

\begin{proposition} (\cite[Theorem 6.3.13]{HJ}) \ \label{prop215}
Suppose $M$ is a symmetric matrix with positive entries. \  Then $M$ is infinitely divisible if and only if $\log M$ (meaning the Hadamard logarithm matrix) is CPD.
\end{proposition}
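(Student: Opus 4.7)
The plan is to invoke Theorem \ref{thm114} directly and translate its equivalence (i)$\Leftrightarrow$(iv) into the language of Schur powers. The pivotal observation is that if one writes the Hadamard/Schur exponential of a matrix $A$ entry-wise as $(e^A)_{ij} := e^{a_{ij}}$, then applying this to $A = \log M$ (Hadamard logarithm of a matrix with positive entries) recovers precisely the Schur power: $\bigl(e^{p \log M}\bigr)_{ij} = e^{p \log m_{ij}} = m_{ij}^p$, i.e., $e^{p \log M} = M^{\circ p}$. Thus the property ``$M^{\circ p}$ is positive definite for every $p>0$'' (infinite divisibility) is \emph{literally} the property ``$e^{pA}$ is positive definite for every $p>0$'' of Theorem \ref{thm114}(iv), applied to $A=\log M$.

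For the forward direction, I would assume $\log M$ is CPD and apply the implication (i)$\Rightarrow$(iv) of Theorem \ref{thm114} to $A = \log M$: this gives $e^{p \log M} \geq 0$ for every $p>0$, and the observation above identifies this matrix with $M^{\circ p}$. Hence $M$ is infinitely divisible. For the reverse direction, one can again quote Theorem \ref{thm114}, this time (iv)$\Rightarrow$(i), since by hypothesis $M^{\circ p} = e^{p \log M}$ is positive definite for every $p>0$, yielding CPD of $\log M$.

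If one prefers a self-contained argument for the reverse direction (which also helps illustrate the role of the vanishing-sum constraint), the natural approach is the standard ``derivative at $p=0$'' trick. Fix a vector $\bm{v}$ with $\sum_i v_i = 0$ and let $J$ be the all-ones matrix of the appropriate size; then $\bm{v}^{\ast} J \bm{v} = \bigl|\sum_i v_i\bigr|^2 = 0$, so for every $p>0$,
\begin{equation*}
\bm{v}^{\ast}\left(\frac{M^{\circ p} - J}{p}\right)\bm{v} = \frac{1}{p}\,\bm{v}^{\ast} M^{\circ p} \bm{v} \geq 0.
\end{equation*}
Since $\frac{m_{ij}^p - 1}{p} \to \log m_{ij}$ entry-wise as $p \to 0^+$, passing to the limit yields $\bm{v}^{\ast}(\log M)\bm{v} \geq 0$, which is exactly CPD of $\log M$.

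There is no real obstacle here beyond unwinding conventions: once one recognizes that the Hadamard exponential of $\log M$ is the Schur power of $M$, the proposition is essentially a notational repackaging of the equivalence (i)$\Leftrightarrow$(iv) in Theorem \ref{thm114}. The only minor point worth being careful about is that the convention throughout the paper uses the Hadamard/Schur functional calculus (both for $\log$ and implicitly for $\exp$), which matches the setting of Theorem \ref{thm114} as stated from \cite{Bh,HJ}; if the cited version of that theorem uses a matrix-exponential convention, one simply verifies that the equivalence continues to hold in the Hadamard interpretation, which is immediate from the preceding observation.
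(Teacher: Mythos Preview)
Your argument is correct: once one recognizes that the Hadamard exponential of $p\log M$ is the Schur power $M^{\circ p}$, the proposition is indeed just the equivalence (i)$\Leftrightarrow$(iv) of Theorem~\ref{thm114} specialized to $A=\log M$, and your optional derivative-at-$p=0$ argument for the reverse direction is the standard self-contained proof. Note, however, that the paper does not give its own proof of this proposition; it is simply quoted as a known result from \cite[Theorem~6.3.13]{HJ} (attributed to Loewner via \cite{Bh}), so there is nothing in the paper to compare your approach against.
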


\begin{remark}
\rm{
CPD matrices are intrinsically associated with \textit{distance} matrices, as follows. \ In $\mathbb{R}^d$, consider $k$ points $\bm{p}^{(1)},\ldots,\bm{p}^{(k)}$ and their Euclidean distances $\left\|\bm{p}^{(i)}-\bm{p}^{(j)}\right\|^2 \; (i,j=1,\ldots,k)$. \ Now form the $k \times k$ matrix $D(\bm{p}^{(1)},\ldots,\bm{p}^{(k)}) \equiv \left(d_{ij}\right)_{i,j=1}^k$ where $d_{ij}(\bm{p}^{(1)},\ldots,\bm{p}^{(k)}):=\left\|\bm{p}^{(i)}-\bm{p}^{(j)}\right\|^2$, for $i,j=1,\ldots,k$; $D \equiv D(\bm{p}^{(1)},\linebreak \ldots,\bm{p}^{(k)})$ is called the \textit{distance} matrix associated with the $k$--tuple $(\bm{p}^{(1)},\ldots,\bm{p}^{(k)})$. \ Clearly, $d_{ii}=0$ for $i=1,\ldots,k$, so the diagonal entries of $D$ are all zero. \ Moreover, $-D$ is CPD, as a calculation reveals. \ In fact, fix $i$ and $j$ and write $\left\|\bm{p}^{(i)}-\bm{p}^{(j)}\right\|^2=\left\|\bm{p}^{(i)}\right\|^2+\left\|\bm{p}^{(j)}\right\|^2-2 \left\langle \bm{p}^{i},\bm{p}^{j}\right\rangle$. \ Now let $\omega_i:=\left\|\bm{p}^{(i)}\right\|^2$, $m_{ij}:=\omega_i+\omega_j$, $M:=(m_{ij})_{i,j=1}^k$, and $g_{ij}:=\left\langle \bm{p}^{i},\bm{p}^{j}\right\rangle$. \ 
For $k \ge 2$, consider the map from Hermitian $k \times k$ matrices to Hermitian $(k-1) \times (k-1)$ matrices, given by $A \longmapsto \Phi(A):=(a_{ij}-a_{i,j+1}-a_{i+1,j}+a_{i+1,j+1})_{i,j=1}^{k-1}$. \ The above mentioned matrix $M$ is a typical element of the linear subspace of $k \times k$ matrices $A$ such that $\Phi(A)=0$; that is, $M \in \ker \Phi$. \ It follows that $D$ can be written as the difference between a matrix in $\ker \Phi$ and twice the Gramian matrix $G:=(g_{ij})_{i,j=1}^k$. \ 
}
\end{remark}

Now consider an arbitrary CPD Hermitian $k \times k$ matrix $A \equiv (a_{ij})_{i,j=1}^k$ and let $H_A:=(h_{ij})_{i,j=1}^k$, with $h_{ij}:=(a_{ii}+a_{jj})/2 \quad (i,j=1,\ldots,k)$. \ We already know that $\Phi(H_A) = 0$, so $H_A \in \ker \Phi$. \ Moreover, $A-H_A$ is CPD and has diagonal identically equal to zero; then, by \cite[Theorem 4.1.7]{BaRa}, $-(A-H_A)$ is a distance matrix. \ From the discussion above, we see that 
$$
-(A-H_A)=M-2G,
$$
for some $M \in \ker \Phi$ and $G$ Gramian. \  
As a result, we can prove that, given a CPD Hermitian $k \times k$ matrix $A$, one can find a positive integer $d$, and $k$ points $\bm{p}^{(i)} \in \mathbb{R}^d \; (i=1,\ldots,k)$, such that $A-2G \in \ker \Phi$, where $G$ is the Gramian of the $k$--tuple $(\bm{p}^{(1)},\ldots,\bm{p}^{(k)})$. \ In particular, $\Phi(A) = 2 \Phi(G) \ge 0$, since it is well-known that Gramians are PD (see, for instance, \cite[Theorem 7.2.10]{HJ2}).

We close this section by mentioning a very recent paper of Z.J. Jab\l o\'nski, I.B. Jung and J. Stochel \cite{JJS}, in which they study the CPD condition for operators on Hilbert space. \ The focus is on CPD sequences of exponential growth, with emphasis given to obtaining criteria for their positive definiteness.

\section{A Bridge Between $k$--hyponormality and $n$--contractivity} \label{main}

Proposition \ref{prop215} establishes an equivalence between the infinite divisibility of a moment matrix $M$ and the conditional positive definiteness of its Schur logarithm $\log M$. \ This allows us to view $k$--hyponormality under a new lens, and describe moment infinite divisibility of a weighted shift $W_{\alpha}$ as follows; for each $k \ge 1$, $i \ge 0$, and $p >0$, the matrix
$$M_{\gamma}^p(i,k) = \begin{pmatrix}
\gamma_i^p & \gamma_{i+1}^p &\gamma_{i+2}^p  &\ldots  & \gamma_{i+k}^p\\
 \gamma_{i+1}^p & \gamma_{i+2}^p &  \ldots & \ldots & \gamma_{i+k+1}^p \\
 \gamma_{i+2}^p & \ldots & \ldots & \ldots  & \gamma_{i+k+2}^p  \\
\vdots  & \vdots & \vdots & \ddots & \vdots \\
\gamma_{i+k}^p & \gamma_{i+k+1}^p & \gamma_{i+k+2}^p  & \ldots & \gamma_{i+2k}^p \\
\end{pmatrix}$$
is positive definite, where the $\gamma_i$ are the moments of the shift. \  (When $p=1$, we will simply write $M_{\gamma}(i,k)$.)

The following two corollaries are immediate. \ Note, however, that in this context we do not need to assume that $W_\alpha$ is a contraction: while the Agler conditions (equivalently, the approach through monotone moment sequences) yield subnormality for a contraction (but not in general, viz. the Dirichlet shift), the $k$--hyponormality conditions do yield subnormality for any shift. 

Observe also that if we wish to operate at the level of infinite matrices of moments, we must take care that we examine two such matrices. \ Recall that for subnormality we are in pursuit of \textit{Stieltjes} moment sequences, that is, moment sequences $(\gamma_n)_{n=0}^\infty$ for which the following two matrices are simultaneously positive definite \cite{Ak}:
\begin{equation} \label{def:MofW}
M_{\gamma}(0) \equiv M_{\gamma}(0,\infty):= \left(
\begin{array}{ccccc}
\gamma _{0} & \gamma _{1} & \cdots & \gamma _{k} & \ldots \\
\gamma _{1} & \gamma _{2} & \cdots & \gamma _{k+1} & \ldots\\
\vdots & \vdots & \ddots & \vdots & \vdots\\
\gamma _{k} & \gamma _{k+1} & \cdots & \gamma _{k+2n} & \ldots \\
\vdots & \vdots & \ddots & \vdots &  \ldots
\end{array}
\right)
\end{equation}
and 
\begin{equation} \label{def:M1ofW}
M_{\gamma}(1) \equiv M_{\gamma}(1,\infty): = \left(
\begin{array}{ccccc}
\gamma _{1} & \gamma _{2} & \cdots & \gamma _{k+1} & \ldots \\
\gamma _{2} & \gamma _{3} & \cdots & \gamma _{k+2} & \ldots\\
\vdots & \vdots & \ddots & \vdots & \vdots\\
\gamma _{k+1} & \gamma _{k+2} & \cdots & \gamma _{k+2n+1} & \ldots \\
\vdots & \vdots & \ddots & \vdots &  \ldots
\end{array}
\right).
\end{equation}

\begin{corollary}
Let $W_\alpha$ be a weighted shift. \ Then $W_\alpha$ is $\mathcal{MID}$ if and only if for each $i \geq 0$ and each $k \geq 1$, 
$\log M_{\gamma}(i,k)$ is CPD (equivalently, both $\log M_{\gamma}(0)$ and $\log M_{\gamma}(1)$ are CPD).
\end{corollary}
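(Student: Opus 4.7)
The plan is to chain together three already-available ingredients: the definition of $\mathcal{MID}$, the Stieltjes characterization of subnormality for a bounded weighted shift, and Proposition \ref{prop215} (Loewner's theorem) relating infinite divisibility of a matrix to the CPD property of its Hadamard logarithm. Throughout I will work primarily at the level of finite principal submatrices, since that is where Proposition \ref{prop215} and Theorem \ref{thm114} are stated, and I will use the convention that CPD (resp.\ positive definiteness) of an infinite Hankel matrix means CPD (resp.\ PD) of every finite principal minor.

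First I would observe that $W_\alpha \in \mathcal{MID}$ means that for every $p>0$ the weighted shift $W_\alpha^p$, whose moment sequence is $(\gamma_n^p)$, is subnormal. For a bounded shift, subnormality is equivalent (via the Stieltjes moment problem) to the simultaneous positive definiteness of the two infinite Hankel matrices $M_{\gamma^p}(0)$ and $M_{\gamma^p}(1)$. Since Schur-powering of Hankel matrices of moments corresponds exactly to raising moments to a power, this reads $M_{\gamma}^p(0)\ge 0$ and $M_{\gamma}^p(1)\ge 0$ for all $p>0$; quantifying over all $p$, the two infinite Hankel matrices $M_\gamma(0)$ and $M_\gamma(1)$ are Schur-infinitely-divisible. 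Restricting to any contiguous block $M_\gamma(i,k)$, Proposition \ref{prop215} applied on the finite matrix yields the equivalence with CPD of $\log M_\gamma(i,k)$, giving the first form of the characterization.

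For the second form (the two infinite log-matrices are CPD), the $(\Leftarrow)$ direction is immediate since the contiguous blocks $M_\gamma(i,k)$ are particular principal submatrices. For $(\Rightarrow)$, the hypothesis combined with Theorem \ref{thm114}(iv) gives $M_\gamma^p(i,k)\ge 0$ for all $i\ge 0$, $k\ge 1$, and $p>0$; in particular, the leading principal Hankel minors of the infinite Hankel $M_\gamma^p(0)$ (and similarly $M_\gamma^p(1)$) are all PD. By the Hamburger/Stieltjes representation this yields a representing measure $\mu^{(p)}$ for $\gamma^p$, and then for any (not necessarily contiguous) index set $\{s_0<\dots<s_m\}$ and any real vector $\bm{c}$ one has
\[
\sum_{r,s=0}^{m} c_r c_s\,\gamma^p_{s_r+s_s}=\int \Bigl(\sum_{r=0}^{m} c_r t^{s_r}\Bigr)^{\!2}\,d\mu^{(p)}(t)\ge 0,
\]
so \emph{every} finite principal submatrix of $M_\gamma^p(0)$ is PD, and the same for $M_\gamma^p(1)$. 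Each finite principal submatrix of $M_\gamma(0)$ (resp.\ $M_\gamma(1)$) is therefore infinitely divisible, so by Proposition \ref{prop215} its Hadamard logarithm is CPD. This is precisely the definition of CPD for the two infinite matrices.

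The main obstacle is the subtle mismatch between the finite-matrix statement of Loewner's theorem and the infinite-matrix formulation in the corollary, compounded by the fact that the contiguous Hankel blocks $M_\gamma(i,k)$ are not all of the principal submatrices of $M_\gamma(0)$ and $M_\gamma(1)$. The Vandermonde-type trick above, bootstrapped from infinite divisibility via a representing measure for each Schur power, is the key device that lets me pass from CPD of the contiguous blocks to CPD of arbitrary finite principal minors, thereby reconciling the two formulations.
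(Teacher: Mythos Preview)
Your argument is correct and follows the same route the paper intends: chain the definition of $\mathcal{MID}$, the subnormality characterization for shifts (Stieltjes / Bram--Halmos), and Proposition~\ref{prop215}. The paper in fact declares this corollary ``immediate'' and gives no further details.

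The one place where you go beyond the paper is worth noting. The paper's definition of CPD for an infinite matrix requires CPD of \emph{all} finite principal minors, not just the contiguous Hankel blocks $M_\gamma(i,k)$, and the paper does not pause to explain why CPD of the contiguous blocks suffices. Your Vandermonde / representing-measure argument fills this gap cleanly: once each $M_\gamma^p(i,k)\ge 0$ you obtain a Berger measure $\mu^{(p)}$ for $\gamma^p$, and the integral identity then gives positivity of every (possibly non-contiguous) principal submatrix of $M_\gamma^p(0)$ and $M_\gamma^p(1)$, so Proposition~\ref{prop215} applies to each of them. That is a genuine, if small, addition of rigor over the paper's ``immediate.''
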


\begin{corollary}     \label{cor:infDivkHN}
Let $W_\alpha$ be a weighted shift and let $k \in \mathbb{N}$. \ Then $W_\alpha^p$ is $k$--hyponormal for all $p>0$ if and only if 
$\log M_{\gamma}(i,k)$ is CPD for each $i \geq 0$.
\end{corollary}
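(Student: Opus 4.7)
The plan is to reduce both directions to a statement about Schur infinite divisibility of a fixed finite Hankel matrix and then invoke Loewner's theorem.

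First I would translate the left-hand side into positivity of moment matrices. The $n$--th weight of the Schur power $W_\alpha^p$ is $\alpha_n^p$, so the moments of $W_\alpha^p$ are precisely $(\gamma_n^p)_{n \ge 0}$. By the standard $k$--hyponormality characterization for weighted shifts recalled in Subsection \ref{khypon} (from \cite{Cu}), $W_\alpha^p$ is $k$--hyponormal if and only if the $(k+1)\times(k+1)$ Hankel matrices with entries $\gamma_{i+r+s}^p$ are positive definite for every $i \ge 0$. But this matrix is exactly $M_\gamma^p(i,k)$, the Schur $p$--th power of $M_\gamma(i,k)$. Thus ``$W_\alpha^p$ is $k$--hyponormal for every $p>0$'' is synonymous with ``$M_\gamma^p(i,k) \ge 0$ for every $p>0$ and every $i \ge 0$.''

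Next, for each fixed $i \ge 0$, the condition that $M_\gamma^p(i,k) \ge 0$ for every $p>0$ is, by definition, Schur infinite divisibility of the symmetric matrix $M_\gamma(i,k)$, whose entries are strictly positive because the weights $\alpha_n$ are positive. Hence the Hadamard logarithm $\log M_\gamma(i,k)$ is well-defined and real, and Proposition \ref{prop215} (Loewner) applies: infinite divisibility of $M_\gamma(i,k)$ is equivalent to $\log M_\gamma(i,k)$ being CPD. Alternatively one can appeal directly to the equivalence (i)$\Leftrightarrow$(iv) in Theorem \ref{thm114}, since the Schur exponential of $p \log M_\gamma(i,k)$ is precisely $M_\gamma^p(i,k)$.

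Quantifying over $i \ge 0$ then yields the claimed biconditional. The argument is essentially a direct assembly of the ingredients already present: Curto's characterization of $k$--hyponormality, the interpretation of Schur powers at the level of moments, and Loewner's theorem. There is no genuine obstacle; the only point requiring a small comment is the observation that we work with finite principal Hankel submatrices $M_\gamma(i,k)$ (so that both Proposition \ref{prop215} and Theorem \ref{thm114} apply without modification) and that positivity of entries guarantees that the entrywise logarithm makes sense, so that ``CPD'' is unambiguous here.
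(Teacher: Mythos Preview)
Your proposal is correct and follows essentially the same route as the paper: the paper declares this corollary ``immediate'' from the $k$--hyponormality characterization of weighted shifts via positivity of $M_\gamma(i,k)$, the observation that the moments of $W_\alpha^p$ are $\gamma_n^p$, and Proposition~\ref{prop215} (Loewner). Your write-up simply spells out these steps in detail.
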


We leave to the interested reader to show that the corollaries may be improved to add the equivalent condition ``subnormal for each $p$ in some interval $[0, \delta)$ with $\delta > 0$'' (respectively, ``$k$--hyponormal for each $p$ in some interval $[0, \delta)$ with $\delta > 0$''). \  This may be accomplished by considering positivity of Schur products of positive matrices.

We may apply this to a weighted shift to yield the following result. \ This yields a test for subnormality which is, as far as we know, new.

\begin{proposition} \label{propnew1}
Let $W_{\alpha}$ be a weighted shift with moment sequence $\gamma = (\gamma_n)_{n=0}^\infty$. \ The following statements are equivalent. \newline
(i) $W_{\alpha}$ is $\mathcal{MID}$. \newline
(ii) The sequence $(\delta_n)_{n=0}^\infty$ with $\delta_n = \ln\left(\frac{\gamma_n \gamma_{n+2}}{\gamma_{n+1}^2}\right)$ is a Stieltjes moment sequence. \newline
(iii) The weighted shift with moments $\left(\frac{\delta_n}{\delta_0}\right)$ is subnormal.
\end{proposition}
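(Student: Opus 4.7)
The plan is to reduce everything to a direct application of Theorem \ref{thm114} combined with the corollary just established that $W_\alpha \in \mathcal{MID}$ if and only if both $\log M_\gamma(0)$ and $\log M_\gamma(1)$ are CPD. The pivotal observation is a one-line algebraic identity: the ``double forward difference'' operation appearing in Theorem \ref{thm114}(ii), when applied to the Hankel matrix with entries $\log \gamma_{i+j}$, produces precisely the Hankel matrix built from the sequence $\delta$.

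For $(\mathrm{i}) \Longleftrightarrow (\mathrm{ii})$, I would apply Theorem \ref{thm114}(i)$\Longleftrightarrow$(ii) to each of $\log M_\gamma(0)$ and $\log M_\gamma(1)$. For $\log M_\gamma(0)$, the associated matrix $B$ of Theorem \ref{thm114}(ii) has $(i,j)$ entry
\[
\log \gamma_{i+j} - \log \gamma_{i+j+1} - \log \gamma_{i+j+1} + \log \gamma_{i+j+2} \;=\; \log\!\left(\frac{\gamma_{i+j}\gamma_{i+j+2}}{\gamma_{i+j+1}^{2}}\right) \;=\; \delta_{i+j},
\]
so $B = M_\delta(0)$. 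The same computation applied to $\log M_\gamma(1)$ shifts indices by one and yields $M_\delta(1)$. Hence $W_\alpha \in \mathcal{MID}$ if and only if $M_\delta(0) \ge 0$ and $M_\delta(1) \ge 0$, which by the classical Stieltjes theorem (cf.\ the discussion surrounding \eqref{def:MofW}--\eqref{def:M1ofW}) is exactly the statement that $\delta$ is a Stieltjes moment sequence.

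For $(\mathrm{ii}) \Longleftrightarrow (\mathrm{iii})$, the Stieltjes property is preserved by positive scaling, so $(\delta_n)$ is a Stieltjes moment sequence iff $(\delta_n/\delta_0)$ is, provided $\delta_0 > 0$. A weighted shift is subnormal exactly when its moment sequence (which begins with $1$) is a Stieltjes moment sequence with a representing probability measure, which is precisely what normalization by $\delta_0$ delivers. The degenerate case $\delta_0 = 0$ forces $\gamma_1^2 = \gamma_0 \gamma_2$, and (when $\delta$ is Stieltjes) all $\delta_n$ vanish; this case must be handled separately, e.g.\ by interpreting (iii) via the trivial shift or by restricting to the case $\alpha_0 < \alpha_1$.

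The only real subtlety is the very short verification that the double-difference matrix attached to $\log M_\gamma(\ell)$ is literally $M_\delta(\ell)$; once that identity is in hand, the proposition falls out of Theorem \ref{thm114} with essentially no additional work, so I expect the main technical point is simply recording the algebraic computation cleanly and flagging the $\delta_0 = 0$ corner case.
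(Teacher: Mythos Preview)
Your proposal is correct and follows essentially the same route as the paper: the paper's proof is a one-sentence appeal to the corollary that $W_\alpha\in\mathcal{MID}$ iff $\log M_\gamma(0)$ and $\log M_\gamma(1)$ are CPD, Theorem~\ref{thm114}(i)$\Leftrightarrow$(ii), and the Stieltjes two-matrix criterion, which is exactly the skeleton you spell out. Your version is simply more explicit (you write down the identity $\log\gamma_{i+j}-2\log\gamma_{i+j+1}+\log\gamma_{i+j+2}=\delta_{i+j}$) and you flag the degenerate case $\delta_0=0$, which the paper passes over in silence.
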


\begin{proof}
This is immediate from the fact that $\log M_{\gamma}(0)$ and $\log M_{\gamma}(1)$ must be CPD, Theorem \ref{thm114} (i) $\Leftrightarrow$ (ii), and the standard two-matrix requirement that a sequence be a Stieltjes moment sequence.  \qed
\end{proof}

When the weighted shift is a contraction we may add to the list, in Proposition \ref{propnew1}, of conditions equivalent to moment infinite divisibility, as we do in the following theorem. \  Note that, however, as in the observation following Corollary \ref{cor:infDivkHN}, we may instead add a condition only on some Schur powers $p$ for $p$ in a small interval $[0, \delta)$. \ To extend this to all $p>0$, one needs to resort to Schur multiplication to go from say, $p$ to $2p$, and so on.

We now prove our first main result, which we restate for the reader's convenience.

\begin{theorem}  \label{thm:equivcondtIDcontraction2}
Let $W_\alpha$ be a contractive weighted shift, and let $M_{\gamma}(i,k)$, $M_{\gamma}(0)$ and $M_{\gamma}(1)$ be as above. \ The following statements are equivalent.
\begin{enumerate}
             \item $W_\alpha$ is $\mathcal{MID}$.
             \item $\log M_{\gamma}(0)$ and $\log M_{\gamma}(1)$ are CPD.
						 \item For every $i \ge 0$ and every $k \ge 1$, $\log M_{\gamma}(i,k)$ is CPD.
             \item For every $p>0$, $M_{\gamma}^p(0)$ and $M_{\gamma}^p(1)$ are positive definite.
             \item For every $p>0$, $M_{\gamma}^p(0)$ and $M_{\gamma}^p(1)$ are CPD.
             \item The moment sequence $\gamma$ is log completely monotone.
             \item The weight sequence $\alpha$ is log completely alternating.
\end{enumerate}
\end{theorem}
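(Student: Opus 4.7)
The plan is to build on the corollary immediately preceding the theorem, which already establishes $(1)\Leftrightarrow(2)\Leftrightarrow(3)$, and to attach items $(4),(5),(6),(7)$ via three short arguments. First, $(2)\Leftrightarrow(4)\Leftrightarrow(5)$ falls out of Theorem~\ref{thm114}: its (i)$\Leftrightarrow$(iv), applied with $A:=\log M_{\gamma}(0)$ and with $A:=\log M_{\gamma}(1)$, is exactly $(2)\Leftrightarrow(4)$, since the Schur exponential of the Schur logarithm returns the Schur power, i.e., $e^{p\log M_{\gamma}(i)} = M_{\gamma}^p(i)$ entry-wise. The direction $(4)\Rightarrow(5)$ is immediate because PD implies CPD; for $(5)\Rightarrow(2)$ I would Taylor-expand entry-wise to get $M_{\gamma}^p(i) = J + p\log M_{\gamma}(i) + O(p^2)$ with $J$ the all-ones matrix, then observe that a zero-sum vector $\bm{c}$ annihilates $J$, so CPD of $M_{\gamma}^p(i)$ for arbitrarily small $p>0$ forces $\bm{c}^{\ast}(\log M_{\gamma}(i))\bm{c} \ge 0$ upon dividing by $p$ and letting $p\to 0^{+}$.

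Next, $(6)\Leftrightarrow(7)$ is direct: the identity $\gamma_{n+1}/\gamma_n = \alpha_n^2$ yields $\nabla(\log \gamma) = -2\log\alpha$, and iterating gives $\nabla^n(\log \gamma) = -2\nabla^{n-1}(\log \alpha)$ for $n\ge 1$. Hence $\log\gamma$ is $n$-monotone for every $n\ge 1$ if and only if $\log\alpha$ is $m$-alternating for every $m\ge 0$; the $m=0$ case is the contractivity hypothesis $\alpha_n\le 1$, and the $m\ge 1$ cases constitute complete alternation of $\log\alpha$.

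The central step is $(1)\Leftrightarrow(6)$, for which I would appeal to Proposition~\ref{propnew1}, which identifies (1) with the sequence $\delta_n := \log(\gamma_n\gamma_{n+2}/\gamma_{n+1}^2) = (\nabla^2\log\gamma)_n = 2\log(\alpha_{n+1}/\alpha_n)$ being a Stieltjes moment sequence. For $(1)\Rightarrow(6)$, the key observation is that contractivity plus the hyponormality implied by $\mathcal{MID}$ pins $\alpha_0 \le \alpha_n \le \alpha_{n+1} \le 1$, so $\delta_n \in [0, -2\log\alpha_0]$ is bounded; and a \emph{bounded} Stieltjes moment sequence must have representing measure supported on $[0,1]$ (any mass in $(1,\infty)$ would force exponential growth of the moments), hence is Hausdorff-completely-monotone, giving $\nabla^m\delta \ge 0$ for every $m\ge 0$. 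Translating via $\nabla^m\delta = \nabla^{m+2}(\log\gamma)$ and noting that $\nabla(\log\gamma) = -2\log\alpha \ge 0$ is automatic from contractivity, we obtain (6). The converse $(6)\Rightarrow(1)$ runs in reverse: complete monotonicity of $\log\gamma$ makes $\delta$ Hausdorff-completely-monotone, in particular Stieltjes, and Proposition~\ref{propnew1} returns $\mathcal{MID}$. The principal obstacle is exactly this bounded-Stieltjes-to-Hausdorff upgrade: without the a priori boundedness of $\delta$ secured by contractivity plus hyponormality, the CPD conditions by themselves would encode only a Stieltjes-type restriction on $\nabla^2(\log\gamma)$, not the full complete monotonicity of $\log\gamma$ demanded by (6).
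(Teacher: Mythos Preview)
Your argument is correct, but the route you take for the one nontrivial implication differs from the paper's. The paper treats everything except $(5)\Rightarrow$(rest) as immediate from prior results (Corollary~3.1, Proposition~\ref{prop215}/Theorem~\ref{thm114}, and the characterizations in \cite{BCE,BCESC}), and then handles $(5)\Rightarrow(1)$ via its central ``bridge'' idea: for each fixed $p>0$, test the CPD matrices $M_{\gamma^p}(\ell,k)$ against the zero-sum negative-binomial vector $\bm v=\bigl((-1)^i\binom{k}{i}\bigr)_{i=0}^{k}$ to obtain the $2k$--monotonicity inequalities for $\gamma^p$ (identity~(\ref{identity2})), then invoke Corollary~\ref{contnmonoimpnhypermono} (contractivity upgrades $2k$--monotone to $2k$--hypermonotone) to conclude that $\gamma^p$ is completely monotone and hence $W_{\alpha}^p$ is subnormal for every $p$.

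Your alternative for $(5)\Rightarrow(2)$, the entrywise Taylor expansion $M_{\gamma}^p=J+p\,\log M_{\gamma}+O(p^2)$ together with $\bm c^{\ast}J\bm c=0$ for zero-sum $\bm c$, is a clean analytic shortcut that bypasses the Agler--contractivity machinery entirely; it is essentially the CPD version of the standard proof of (iv)$\Rightarrow$(i) in Theorem~\ref{thm114}. You also supply self-contained proofs of $(1)\Leftrightarrow(6)$ (via Proposition~\ref{propnew1} plus the bounded-Stieltjes-is-Hausdorff observation) and of $(6)\Leftrightarrow(7)$, whereas the paper regards these as already known from \cite{BCE,BCESC}. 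The trade-off: the paper's proof showcases the $k$--hyponormality/$n$--contractivity bridge that is the paper's theme, while your approach is more direct and arguably more elementary for this particular theorem, though it does not illuminate that bridge.
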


\begin{proof}
The only assertion not immediate is the equivalence of $M_{\gamma}^p(0)$ and $M_{\gamma}^p(1)$ being CPD (for all $p>0$) to the others. \ Fix some power $p>0$ for the moment. \ Using the observation just before the theorem, the condition in question yields all of the $2k$--monotonicity conditions, and, using Corollary \ref{cor:nmonoimpnhypermonoshifts} we obtain $2k$--hypermonotonicity for all $k$, yielding that the sequence of moments $\gamma^{p}$ is completely monotone. \  Since $p$ was arbitrary, this yields the result. \qed
\end{proof}

In view of the results in this section, one can try and extend the notion of moment infinite divisibility for weighted shifts to arbitrary contractions on Hilbert space, using the polar decomposition factor, as in Problem \ref{problem1} below. \ We plan to pursue this idea in a forthcoming paper.

\begin{problem} \label{problem1}
Starting with the canonical polar decomposition of a subnormal operator, $T \equiv V\left|T\right|$, should one consider the subnormality of $V\left|T\right|^p$ for all $p>0$ as the proper analog of moment infinite divisibility?
\end{problem}


\subsection{A unifying condition} \label{condition}

It is well-known that $k$--hyponormality for all $k = 1, 2, \ldots$ is necessary and sufficient for subnormality of an operator;  it is true as well that $n$--contractivity for all $n = 1, 2, \ldots$ is sufficient for subnormality of an operator, and necessary if the operator is a contraction. \  Of course these, in their equivalents as conditions on moments of the shift, hold as well. \  With the exception of \cite[Theorem 1.2]{EJP}, which shows in general that $k$--hyponormality implies $2k$--contractivity, and work in \cite{AE} which gives, in a very special case, a situation in which each $k$--hyponormality coincides exactly with a particular $n$--contractivity, the two sets of conditions have remained resistant to a common point of view. \  It is the goal of this subsection to give a more general condition exhibiting a unified point of view yielding the sets of conditions.

We will have occasion to operate mainly upon a sequence likely in the sequel to be the moment sequence of a weighted shift (often contractive), so we will employ $\gamma = (\gamma_n)_{n=0}^\infty$ for this sequence. \  Recall the forward difference operator $\nabla$ acting on sequences by $\nabla(\gamma)_n = \gamma_n - \gamma_{n+1}$ for all $n = 0, 1, \ldots$, and with its powers defined iteratively as $\nabla^{m+1}(\gamma) := \nabla(\nabla^{m}(\gamma))$ for all $m \geq 0$; we let $\nabla^{0}(\gamma) := \gamma$. \  Note that $m$--contractivity means that $\nabla^{m}(\gamma)$ is a non-negative sequence.

For a sequence $a = (a_n)_{n=0}^\infty$, $k \geq 1$, and $i \geq 0$, we let $M_a(i,k)$ denote the Hankel matrix of size $k+1$ by $k+1$ given by
$$M_a(i,k) := \begin{pmatrix}
a_i & a_{i+1} &a_{i+2}  &\ldots  & a_{i+k}\\
 a_{i+1}& a_{i+2} &  \ldots & \ldots & a_{i+k+1} \\
a_{i+2} & \ldots & \ldots & \ldots & a_{i+k+2} \\
\vdots  & \vdots & \vdots & \ddots & \vdots \\
a_{i+k} & a_{i+k+1} & a_{i+k+2} & \ldots & a_{i+2k} \\
\end{pmatrix}.$$
(Observe that we use the positive integer $k$ to keep track of $k$--hyponormality, but the actual size of the related matrices is $k+1$, as in Subsection \ref{khypon}.) \ In the case of a weighted shift, $k$--hyponormality is exactly that $M_\gamma(i,k)$ is positive definite for each $i = 0, 1, \ldots$. \  Observe that by  Theorem \ref{thm114} (i) $\Leftrightarrow$ (ii) applied to Hankel matrices, it follows that $M_\gamma(i,k)$ is a CPD matrix if and only if $M_{\nabla^{2}(\gamma)}(i,k-1)$ is positive definite; see also Remark \ref{rem115}.

\begin{definition}  \label{def:kmpositive}
Let $k \ge 1$ and $0 \le m \le k$. \ We say a sequence $\gamma = (\gamma_n)_{n=0}^\infty$ is $(k,2m)$--PD if, for every $i \ge 0$, the matrix $M_{\nabla^{2m}(\gamma)}(i, k-m)$ is positive definite. \  Similarly, we say the sequence is $(k,2m)$--CPD if, for every $i \ge 0$, the matrix $M_{\nabla^{2m}(\gamma)}(i, k-m)$ is CPD.
\end{definition}

We will abuse language slightly to say a weighted shift $W_\alpha$ with moment sequence $\gamma$ is $(k,2m)$--PD (respectively, $(k,2m)$--CPD) if its moment sequence is. \  Observe that a weighted shift is $k$--hyponormal exactly when it is $(k,0)$--PD;  a computation shows that a shift is $(k,2k)$--PD if and only if it is $2k$--contractive. \  This is the sense in which the $k$--hyponormality condition and the $2k$--contractivity condition are simply instances (in fact, extremes) of the single more general condition. \  It is productive to think of the condition as beginning with the moment matrix of size $k+1$, employing repeatedly the matrix transformation implicit in Theorem \ref{thm114}\,(ii), and insisting that the matrix obtained after $m$ such steps be positive definite (respectively, CPD).

Observe also that if the shift $W_\alpha$ is a contraction, with some other mild condition (such as hyponormality), and if $W_\alpha$ is $(k,2k)$--PD, then its moment sequence is $2k$--hypermonotone (see Corollary  \ref{contnmonoimpnhypermono}).

We now record the following proposition.

\begin{proposition}       \label{prop:getAglerfrom}
Let $W_{\alpha}$ be a unilateral weighted shift, $k \ge 1$, and $0 \le m \le k$. \ Assume that $W_{\alpha}$ is $(k,2m)$--CPD. \ Then $W_{\alpha}$ is $2m$--contractive, $(2m+2)$--contractive, \ldots, $2k$--contractive.
\end{proposition}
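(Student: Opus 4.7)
The plan is to extract each $2\ell$-contractivity inequality, for $\ell = m+1, \ldots, k$, directly from the CPD hypothesis by evaluating the CPD quadratic form of $M_{\nabla^{2m}(\gamma)}(i,k-m)$ on suitable alternating-binomial test vectors and invoking the identity (\ref{identity2}); the endpoint $\ell = m$ will be handled separately.

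First, I would set $\beta := \nabla^{2m}(\gamma)$, so the hypothesis reads: $M_\beta(i,k-m)$ is CPD for every $i \ge 0$. Using that CPD is inherited by principal submatrices (pad any admissible test vector with zeros) and that the top-left $(j+1)\times(j+1)$ principal submatrix of the Hankel matrix $M_\beta(i,k-m)$ is exactly $M_\beta(i,j)$, this upgrades to: $M_\beta(i,j)$ is CPD for every $i \ge 0$ and every $1 \le j \le k-m$. Next, for each such $j$, I would feed in the test vector $\bm{v}_j \in \mathbb{R}^{j+1}$ with components $(\bm{v}_j)_l := (-1)^l \binom{j}{l}$; its entries sum to $(1-1)^j = 0$, so the CPD inequality together with the identity (\ref{identity2}) applied to $\beta$ in place of $\gamma$ yields
\[
0 \;\le\; \bm{v}_j^{\ast} M_\beta(i,j) \bm{v}_j \;=\; \sum_{n=0}^{2j} (-1)^n \binom{2j}{n}\beta_{i+n} \;=\; (\nabla^{2j}\beta)_i \;=\; (\nabla^{2(j+m)}\gamma)_i.
\]
Letting $\ell := j+m$ range over $m+1, \ldots, k$ and $i$ over $\mathbb{N}_0$ gives $2\ell$-contractivity of $W_\alpha$ for each such $\ell$.

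The endpoint $\ell = m$, asking $(\nabla^{2m}\gamma)_i = \beta_i \ge 0$, is not produced by the quadratic-form argument, because it amounts to the non-negativity of the diagonal of $M_\beta(i,k-m)$, which CPD alone does not control. This is the main obstacle. In the setting of interest it is supplied either as part of the standing hypothesis that the underlying sequence $\beta$ is sign-definite (trivially so when $m=0$, where $\beta_i = \gamma_i > 0$), or via Corollary \ref{contnmonoimpnhypermono} applied to the $2$-monotonicity of $\beta$ obtained from the $j = 1$ instance above, under the mild tail hypothesis $\beta_i \to 0$ (automatic, for instance, if $W_\alpha$ is contractive with $\alpha_i \to 0$, so that $2$-monotonicity of $\beta$ forces $1$-monotonicity and then non-negativity).
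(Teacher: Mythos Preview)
Your argument for the levels $(2m+2),(2m+4),\ldots,2k$ is exactly the paper's: both test the CPD matrix $M_{\nabla^{2m}(\gamma)}(i,k-m)$ as a quadratic form against zero-sum ``negative binomial'' vectors (the paper leaves the padding implicit, you make it explicit via inheritance of CPD by principal submatrices---these are the same computation). So on that part your proof and the paper's coincide.

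Where you diverge from the paper is in flagging the endpoint $\ell=m$, and you are right to do so. The $2m$-contractivity conclusion asks for $(\nabla^{2m}\gamma)_i=\beta_i\ge 0$, i.e.\ non-negativity of the diagonal of $M_\beta(i,k-m)$, and CPD alone does not control diagonal entries. Every test vector in the paper's proof has coordinates summing to zero, so none of them can extract $\beta_i$; in the extreme case $m=k$ the matrix is $1\times1$ and the CPD condition is vacuous, yet the proposition would assert $2k$-contractivity. This is a genuine gap in the paper's stated proposition and proof, not just in your write-up. The conclusion is secure as written only from $(2m+2)$-contractive onward, or under the stronger $(k,2m)$--PD hypothesis (where diagonal non-negativity is automatic). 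Your proposed rescue via Corollary~\ref{contnmonoimpnhypermono} is sound but needs an auxiliary hypothesis (e.g.\ contractivity plus a limit condition) that the proposition does not assume; the paper does impose exactly such hypotheses in the subsequent theorem where subnormality is actually deduced.
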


\begin{proof}
This is just a computation:  use each matrix $M_{\nabla^{2m}(\gamma)}(i, k-m)$ as a quadratic form against an appropriately sized ``negative binomial'' vector, such as $(0,0,1,-1,0)$ or $(0, 1, -3, 3, -1, 0, 0)$. \ To show one significant instance of this calculation, let $k=4$, $m=2$ and $i=\ell$. \ Then $M_{\nabla^2 \gamma}(\ell,k-m)$ becomes
$$
M_{\nabla^2 \gamma}(\ell,2) \!=\!\! \left( 
\begin{array}{ccc}
\gamma_{\ell}-2\gamma_{\ell+1}+\gamma_{\ell+2} \; & \gamma_{\ell+1}-2\gamma_{\ell+2}+\gamma_{\ell+3} \; & \gamma_{\ell+2}-2\gamma_{\ell+3}+\gamma_{\ell+4} \\
\gamma_{\ell+1}-2\gamma_{\ell+2}+\gamma_{\ell+3} \; & \gamma_{\ell+2}-2\gamma_{\ell+3}+\gamma_{\ell+4} \; & \gamma_{\ell+3}-2\gamma_{\ell+4}+\gamma_{\ell+5} \\
\gamma_{\ell+2}-2\gamma_{\ell+3}+\gamma_{\ell+4} \; & \gamma_{\ell+3}-2\gamma_{\ell+4}+\gamma_{\ell+5} \; & \gamma_{\ell+4}-2\gamma_{\ell+5}+\gamma_{\ell+6} 
\end{array}
\right)\!\!,
$$
and therefore
$$
\left(
\begin{array}{ccc}
 1 & -1 & 0
\end{array}
\right) M_{\nabla^2 \gamma}(\ell,2) \left(
\begin{array}{c}
1 \\
-1 \\
0
\end{array}
\right) = \gamma_{\ell} - 4 \gamma_{\ell+1} + 6 \gamma_{\ell+2} - 4 \gamma_{\ell+3} + \gamma_{\ell+4} , 
$$
which is one of the Agler expressions needed to verify $4$--contractivity. \qed
\end{proof}


\subsection{A family of intermediate ladders}

It is well-known that the $k$--hyponormality conditions ($k \in \mathbb{N}$) provide a \linebreak ``ladder'' with subnormality at the top, and that the $n$--contractivity conditions $n=1, 2, \ldots$ serve similarly if the shift is a contraction. \  If $W$ is a contraction whose weights have a limit (for example, it is hyponormal), and if we fix some $m$, the $(k,2m)$--PD or $(k,2m)$--CPD conditions (for $k \geq m$) form a similar ladder. \  The proof is immediate from Proposition \ref{prop:getAglerfrom}, the fact that (in the presence of contractivity) $n$--monotonicity implies $n$--hypermonotonicity, and the Agler-Embry characterization of subnormality as applied to a shift.

\begin{theorem}
Let $W$ be a contractive weighted shift whose weights have a limit (for example, if $W$ is hyponormal), and fix $m \in \mathbb{N}$. \  If for all $k \geq m$, $W$ is $(k,2m)$--PD or $(k,2m)$--CPD, $W$ is subnormal.
\end{theorem}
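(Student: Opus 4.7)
The plan is to derive the full Agler--Embry criterion---$n$--contractivity for every $n \in \mathbb{N}$---from the $(k,2m)$--CPD hypotheses by harvesting even--order contractivities at every scale and then promoting them to all intermediate orders via hypermonotonicity. All the machinery is already in place in the paper; the proof is essentially assembly.

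First, I would observe that every positive definite matrix is a fortiori CPD (the defining inequality for PD must hold on all vectors, in particular on those with coordinate sum zero), so the ``$(k,2m)$--PD or $(k,2m)$--CPD'' hypothesis reduces without loss of generality to the CPD case for every $k \geq m$. Applying Proposition \ref{prop:getAglerfrom} for each such $k$ then yields that $W$ is $2j$--contractive for $j = m, m+1, \ldots, k$. Letting $k$ range over all integers $\geq m$, one concludes that the moment sequence $\gamma$ is $2j$--monotone for every $j \geq m$.

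Second, I would close the ladder by filling in the odd orders and the small orders $n < 2m$. Because $W$ is contractive, $\gamma = (\gamma_n)_{n=0}^\infty$ is positive, nonincreasing, and bounded below by $0$, hence convergent; this is exactly the hypothesis needed to invoke Corollary \ref{contnmonoimpnhypermono}, which upgrades $2j$--monotonicity to $2j$--hypermonotonicity. Thus $\gamma$ is $n$--monotone for every $1 \leq n \leq 2j$, and since $j$ is arbitrarily large, $\gamma$ is $n$--monotone---equivalently, $W$ is $n$--contractive---for every positive integer $n$. The Agler--Embry characterization of subnormality for contractions (Section \ref{prelim}) then delivers subnormality of $W$.

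I do not anticipate a genuine obstacle; the subtlety worth flagging is that the CPD condition delivers only even contractivity orders directly, so the hypermonotonicity step is indispensable to reach the odd orders and the range $n < 2m$. This is precisely where the convergence of $\gamma$ (and, more than is needed, the ``weights have a limit'' assumption) enters, and without it the step would fail, as the Dirichlet shift example following Proposition \ref{nXvsnhyperX} shows. Incidentally, the weight--limit hypothesis in the statement is redundant once contractivity is assumed, since contractivity alone forces convergence of the moments; I would note this in passing but would otherwise follow the statement as given.
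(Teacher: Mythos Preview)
Your proposal is correct and follows essentially the same route as the paper's own proof: invoke Proposition~\ref{prop:getAglerfrom} to harvest the even--order contractivities, upgrade to hypermonotonicity via Corollary~\ref{contnmonoimpnhypermono} (using contractivity to guarantee convergence of $\gamma$), and conclude by Agler--Embry. Your closing observation that the ``weights have a limit'' clause is redundant under contractivity is also accurate, since Corollary~\ref{contnmonoimpnhypermono} requires only contractivity of the shift.
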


We turn next to some results showing that these ladders are ``different'' by considering zeroth-weight perturbations $A_j(x)$ of the Agler shifts $A_j$. \  Recall that the moment sequence $\gamma^{(j)} = (\gamma_n^{(j)})_{n=0}^\infty$ of the $j$--th Agler shift satisfies
\begin{equation}   \label{eq:Aglerjmoments}
\gamma_n^{(j)} = \frac{(j-1)!\!\;n!}{(n+j-1)!} \; \; \; (j \ge 2, n \ge 0).
\end{equation}
It is a computation to show that
$$\nabla(\gamma^{(j)}) = \frac{j-1}{j}\cdot\gamma^{(j+1)}$$
and that therefore
\begin{equation}  \label{eq:nabla2mgammajIS}
\nabla^{2m}(\gamma^{(j)}) = \frac{j-1}{j+2m - 1}\cdot \gamma^{(j+2m)}.
\end{equation}
Observe that this is in general not a moment sequence, because $(\nabla^{2m}(\gamma^{j}))_0$ may not be equal to $1$. \ However, since $A_j$ is subnormal for all $j$, $\nabla^{2m}(\gamma^{j})$ is, up to a normalizing constant, a Stieltjes moment sequence.


\bigskip

\subsection{Cutoffs for rank-one perturbations of the Agler shifts} \label{cutoffs}

Recall that if we form the standard Hankel matrices of moments of a weighted shift, $n$--hyponormality amounts to positivity of all those matrices of size $n+1$ by $n+1$.  We have seen as well that instead of insisting these matrices be positive, we might ask only that the matrices of size  $n+1$ by $n+1$ be CPD, which is equivalent to the matrices of size $n$ by $n$, and consisting of second differences $(\nabla^{2})$, be positive definite.   And of course we could insist only that these matrices be CPD, which is equivalent to matrices of size $n-1$ by $n-1$, and consisting of fourth differences of the original moments, be positive definite.  We may continue in this way, terminating by insisting that matrices of size $1$ by $1$, and having the entry consisting of $2n$--th order differences, be positive definite, and this turns out just to be the $2n$--th Agler condition.

We set some notation, so if the original moment sequence is $\gamma = (\gamma_n)$, we write $M_{\nabla^{2m}\gamma}(k,n-1)$ for the matrix of $2m$--th order differences of $\gamma$, of size $n$ by $n$, and with the upper-left-hand entry consisting of the difference beginning at $\gamma_k$. \ However, in what follows we need to consider only those matrices for which $k = 0$, and we abbreviate $M_{\nabla^{2m}\gamma}(0,n-1)$
to $(\nabla^{2m} \gamma)_{n}$. \ Thus, for example, 
$$(\nabla^{4} \gamma)_{3} = \left(\begin{array}{ccc}
\gamma_0 - 4 \gamma_1 + \ldots & \gamma_1 - 4 \gamma_2 + \ldots & \gamma_2 - 4 \gamma_3 + \ldots \\
\gamma_1 - 4 \gamma_2 + \ldots & \gamma_2 - 4 \gamma_3 + \ldots & \gamma_3 - 4 \gamma_4 + \ldots \\
\gamma_2 - 4 \gamma_3 + \ldots & \gamma_3 - 4 \gamma_4 + \ldots & \gamma_4 - 4 \gamma_5 + \ldots \\
\end{array}\right).$$

We shall consider perturbations $A_j(x)$ of the $j$--th Agler shift $A_j$ in which the zeroth weight $\alpha_0 = \sqrt{\frac{1}{j}}$ is replaced by $\alpha_0(x) = \sqrt{\frac{x}{j}}$. \  It is well-known that this results in the moment sequence $1, x \gamma^{(j)}_1, x \gamma^{(j)}_2, x \gamma^{(j)}_3, \ldots$. \  The task is to consider the cutoffs in $x$ for which $A_j(x)$ is various $(k,2m)$--PD; as usual with zeroth-weight perturbations, we need to consider only the tests of matrices whose $(1,1)$ entry is $\gamma_0^{(j)} + \ldots$ since other matrices relevant to some $(k,2m)$--PD are simply $x$ times a matrix relevant to $A_{j+2m}$, which is subnormal and so the matrix is positive definite (and hence CPD). \  To detect positive definiteness, we will use the nested determinant test. \ Perturbations $A_2(x)$ of the Bergman shift were studied initially in \cite[Proposition 7]{Cu}, where the specific cutoff for $2$--hyponormality was calculated. \ Here we will follow the approach described in \cite[Section 2]{AE}, both for $k$--hyponormality and $n$--contractivity of Agler-type shifts. \ We will therefore merely sketch the computations here. \ For the reader's convenience, below we state one of the main results in \cite{AE}.

\begin{theorem} \label{AdamsExner} 
(\cite[Theorem 2.6]{AE}) \ For $j \ge 2$, let $A_j(x)$ be the rank-one perturbation of the $j$--th Agler shift, as described above. \newline
(i) \ $A_j(x)$ is $n$--contractive if and only if $x \le c(j,n):=\ddfrac{n+j-1}{n}$. \newline
(ii) \ $A_j(x)$ is $k$--hyponormal if and only if $x \le h(j,k):=\ddfrac{k(k+j)+j-1}{k(k+j)}$. \newline 
As a consequence, $A_j(x)$ is $k$--hyponormal if and only if $A_j(x)$ is $k(k+j)$--contractive.  
\end{theorem}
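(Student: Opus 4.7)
The plan is to observe that in both parts only the Hankel matrix / Agler--Embry expression at $\ell=0$ can be restrictive, and then to compute that single cutoff explicitly. Indeed, the moment sequence of $A_j(x)$ is $\gamma_0 = 1$ and $\gamma_n = x\gamma_n^{(j)}$ for $n \ge 1$, so for every $\ell \ge 1$ one has $M_\gamma(\ell,k) = x\,M_{\gamma^{(j)}}(\ell,k)$ and $\sum_{i=0}^{n}(-1)^{i}\binom{n}{i}\gamma_{\ell+i} = x\sum_{i=0}^{n}(-1)^{i}\binom{n}{i}\gamma_{\ell+i}^{(j)}$. Since $A_j$ is subnormal, the right-hand matrix is positive semidefinite and the right-hand sum is nonnegative, so both tests are automatic for $\ell \ge 1$.

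For (i), one then only needs $1 + x\sum_{i=1}^{n}(-1)^{i}\binom{n}{i}\gamma_{i}^{(j)} \ge 0$, and using the Berger integral representation $\gamma_n^{(j)} = \int_0^1 t^n (j-1)(1-t)^{j-2}\,dt$ one collapses
\[
\sum_{i=0}^{n}(-1)^{i}\binom{n}{i}\gamma_{i}^{(j)} \;=\; \int_0^1 (1-t)^n (j-1)(1-t)^{j-2}\,dt \;=\; \frac{j-1}{n+j-1}.
\]
Solving $1 - x + x\cdot\frac{j-1}{n+j-1} \ge 0$ yields exactly $x \le (n+j-1)/n = c(j,n)$, proving (i).

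For (ii), write the restrictive matrix as the rank-one perturbation $M_\gamma(0,k) = x\,M_{\gamma^{(j)}}(0,k) + (1-x)E_{00}$, where $E_{00}$ has its sole nonzero entry at position $(0,0)$. Since $M_{\gamma^{(j)}}(0,k)$ is positive definite (the Agler shift being $k$-hyponormal), a standard variational argument based on the identity $\min_{\langle e_0,v\rangle=1}\langle M_{\gamma^{(j)}}(0,k)v,v\rangle = 1/[M_{\gamma^{(j)}}(0,k)^{-1}]_{00}$ shows that $M_\gamma(0,k) \ge 0$ if and only if $x \le \rho/(\rho - 1)$, where
\[
\rho \;:=\; \bigl[M_{\gamma^{(j)}}(0,k)^{-1}\bigr]_{00} \;=\; \frac{\det M_{\gamma^{(j)}}(2,k-1)}{\det M_{\gamma^{(j)}}(0,k)}
\]
by Cramer's rule (when $x \le 1$ the inequality is automatic, so this is only binding for $x>1$). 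The main task is to identify $\rho = [k(k+j)+j-1]/(j-1)$, which rearranges to $\rho/(\rho-1) = 1 + (j-1)/[k(k+j)] = h(j,k)$. I expect this Hankel determinant ratio to be the core obstacle; the cleanest route is to exploit the Gram factorization $M_{\gamma^{(j)}}(\ell,k) = \int_0^1 t^\ell (1,t,\ldots,t^k)^T(1,t,\ldots,t^k)\,d\mu_j(t)$ against the Beta weight $d\mu_j(t)=(j-1)(1-t)^{j-2}dt$ and invoke the classical product formula for Hankel determinants of Beta moments, or, more elementarily, to verify the closed form by induction on $k$ using the one-step recursion $\gamma_{n+1}^{(j)} = \frac{n+1}{n+j}\,\gamma_n^{(j)}$ together with elementary row/column operations that trivialize the Hankel structure.

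The final consequence follows at once: a direct substitution gives $c(j,k(k+j)) = [k(k+j)+j-1]/[k(k+j)] = h(j,k)$, so the cutoff in (i) at $n = k(k+j)$ coincides with the cutoff in (ii) at level $k$, and since both conditions are of the form $x \le (\text{cutoff})$, $A_j(x)$ is $k$-hyponormal if and only if $A_j(x)$ is $k(k+j)$-contractive.
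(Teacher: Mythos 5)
Your proposal is correct and takes essentially the route the paper itself relies on: Theorem \ref{AdamsExner} is stated in the paper only by citation to \cite[Theorem 2.6]{AE}, and the paper's own sketch of the generalized cutoffs in Subsection \ref{cutoffs} follows exactly your scheme --- for a zeroth-weight perturbation only the $\ell=0$ test can be binding, and the $k$--hyponormality cutoff is extracted from the ratio of the full Hankel determinant to its principal minor, with that ratio imported from \cite[Lemma 2.5]{AE}. \ Your two self-contained ingredients check out: the Berger-integral collapse $\sum_{i=0}^n(-1)^i\binom{n}{i}\gamma_i^{(j)}=(j-1)/(n+j-1)$ settles (i) cleanly, and your claimed $\rho=\bigl(k(k+j)+j-1\bigr)/(j-1)$ agrees with the value transferred from \cite[Lemma 2.5]{AE} (with $m=0$) since $(k+1)(k+j-1)=k(k+j)+j-1$; the one step you leave unproved --- that Hankel determinant ratio --- is precisely the lemma the paper also defers to, and either of your proposed routes (the classical product formula for Hankel determinants of Beta moments, or induction via $\gamma_{n+1}^{(j)}=\frac{n+1}{n+j}\,\gamma_n^{(j)}$) will close it. \ One small repair: the variational identity $\min_{v_0=1}\langle M_{\gamma^{(j)}}(0,k)v,v\rangle = 1/[M_{\gamma^{(j)}}(0,k)^{-1}]_{00}$ requires $M_{\gamma^{(j)}}(0,k)$ to be invertible, and this strict positive definiteness does not follow from $k$--hyponormality alone (which gives only semidefiniteness) but from the Berger measure $(j-1)(1-t)^{j-2}\,dt$ having infinite support.
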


\begin{remark}
\rm{
(i) \ The quantities on the right-hand side of the inequalities in Theorem \ref{AdamsExner}(i) and (ii) are consistent with previous partial results on rank-one perturbations of weighted shifts (see \cite[Proposition 7]{Cu} and \cite[Problem 3.2 and Proposition 3.4]{CF1}); however, while those results define the $0$--th weight as $x$, for our purposes it is better to write it as $\sqrt \ddfrac{x}{j}$. \newline
(ii) \ In the specific case of the Bergman shift ($j=2$), we get $c(2,n)=\ddfrac{n+1}{n}$ and $h(2,k)=\ddfrac{k^2+2k+1}{k^2+2k}$, so it is immediate that $A_2(x)$ is $2$--hyponormal if and only if it is $8$--contractive.   
}
\end{remark}
 
Our aim is to obtain a comparable result for $(k,2m)$--PD, which will enable us to relate this notion to $n$--contractivity. \ To ease the notation, view $j$ as fixed for the moment (which we then suppress as much as possible) and let $N(k,m)$ denote the matrix $M_{\nabla^{2m}(\gamma^{j})}(0, k-m)$ and let $\hat{N}(k,m)$ denote the matrix obtained from $N(k,m)$ by deleting the first row and first column. \  To detect $(k,2m)$--PD we wish to test, for positivity, the determinant $\det M_{\nabla^{2m}(\gamma^{j}(x))}(0, k-m)$;  it is a computation to show that this positivity is equivalent to 
$$
x \leq \frac{1}{1 - \frac{\det N(k,m)}{\det \hat{N}(k,m)}}.
$$
In light of \eqref{eq:nabla2mgammajIS} and the fact that $N(k,m)$ is of size $k+1-m$, we have
$$\det N(k,m) = \left(\frac{j-1}{j+2m - 1}\right)^{k+1-m} \det M_{\gamma^{(j+2m)}}(0,k-m)$$
and using again \eqref{eq:nabla2mgammajIS} and that $\hat{N}(k,m)$ is of size $k+1-m-1 = k-m$, we obtain
$$\det \hat{N}(k,m) = \left(\frac{j-1}{j+2m - 1}\right)^{k-m} \det M_{\gamma^{(j+2m)}}(2,k-m-1).$$
The ratio $\frac{\det M_{\gamma^{(j+2m)}}(0,k-m)}{\det M_{\gamma^{(j+2m)}}(2,k-m-1)}$ was computed in \cite[Lemma 2.5]{AE};  transferred to our setting, it is
$$\frac{j + 2m - 1}{(k-m + j + 2m-1)(k-m+1)}.$$

Putting this all together yields the following result.

\begin{theorem} \label{thm14}
Let $A_j(x)$ be the perturbation of the Agler shift $A_j$ in which the zeroth weight $\alpha_0 = \sqrt{\frac{1}{j}}$ is replaced by $\alpha_0(x) = \sqrt{\frac{x}{j}}$, and let $k \in \mathbb{N}$ and \linebreak $m \in \{0,1,\ldots,k\}$. \newline
(i) $A_j(x)$ is  $(k,2m)$--PD if and only if $x \leq p(j,k,m)$, where
\begin{equation}
p(j,k,m) = \ddfrac{(k+1-m)(j+k+m-1)}{k^2 + j k + 2m - j m - m^2}=\ddfrac{(j+k+m)(k-m)+2m+j-1}{(j+k+m)(k-m)+2m}. 
\end{equation}
(ii) \ $A_j(x)$ is $(k,2m)$--PD if and only if $A_j(x)$ is $((j+k+m)(k-m)+2m)$--contractive. 
\end{theorem}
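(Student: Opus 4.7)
The plan is to follow the sketch preceding the theorem, extending the rank-one perturbation/nested determinant technique of \cite[Sec.~2]{AE} (which treated $m=0$, i.e., $k$--hyponormality) to arbitrary $m \in \{0,\ldots,k\}$.

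First, I would reduce $(k,2m)$--PD to a single matrix test at $i=0$. Since $\gamma_0(x)=1$ and $\gamma_n(x)=x\gamma_n^{(j)}$ for $n\ge 1$, applying $\nabla^{2m}$ gives $(\nabla^{2m}\gamma(x))_\ell = x(\nabla^{2m}\gamma^{(j)})_\ell$ for $\ell\ge 1$, so for every $i\ge 1$ the Hankel matrix $M_{\nabla^{2m}(\gamma(x))}(i,k-m)$ is $x$ times a positive multiple of $M_{\gamma^{(j+2m)}}(i,k-m)$ (by \eqref{eq:nabla2mgammajIS}) and hence positive definite (since $A_{j+2m}$ is subnormal). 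Only the $i=0$ matrix is nontrivial, and it takes the rank-one form $M(x):=xN(k,m)+(1-x)E_{11}$, with $E_{11}$ the elementary matrix having $1$ in position $(1,1)$ and $0$ elsewhere.

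Second, Schur complement with respect to the $(1,1)$ block of $M(x)$ (noting that the rank-one correction shifts only the $(1,1)$ entry) yields the linear-in-$x$ determinant
$$\det M(x)=x^{k-m}\bigl(x\det N(k,m)+(1-x)\det\hat N(k,m)\bigr).$$
Combining the two determinant identities displayed just above the theorem with \cite[Lemma~2.5]{AE} gives
$$\frac{\det N(k,m)}{\det \hat N(k,m)} = \frac{j-1}{(k+m+j-1)(k-m+1)}<1,$$
so the (only positive) zero occurs at $x^{*}=1/[1-\det N(k,m)/\det\hat N(k,m)]$. A routine simplification $(k+m+j-1)(k-m+1)-(j-1)=k^2+jk+2m-jm-m^2$ identifies $x^{*}$ with $p(j,k,m)$.

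Third, to upgrade this single-determinant condition to full positivity of $M(x)$, I would apply Sylvester's criterion to the nested leading principal submatrices, using the key observation that the $\ell\times\ell$ leading submatrix of $M(x)$ is itself of the same form, namely $M_{\nabla^{2m}(\gamma(x))}(0,\ell-1)$, whose cutoff by Steps~1--2 is $p(j,m+\ell-1,m)$. Rewriting $p(j,k,m)=1+(j-1)/[(j+k+m)(k-m)+2m]$ exhibits the denominator as strictly increasing in $k$, so $p(j,\cdot,m)$ is strictly decreasing; the full ($\ell=k-m+1$) matrix therefore gives the tightest cutoff and all smaller leading minors stay strictly positive on $[0,p(j,k,m)]$. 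This proves (i). Part (ii) is then immediate: with $n=(j+k+m)(k-m)+2m$, $p(j,k,m)=(n+j-1)/n=c(j,n)$, the $n$--contractivity cutoff from Theorem~\ref{AdamsExner}(i). The main anticipated obstacle is precisely this Sylvester/monotonicity bookkeeping: positivity of the top-dimensional determinant alone does not guarantee positive semi-definiteness, so the monotonicity of $p(j,\cdot,m)$ in $k$ is essential to certify that no smaller leading minor fails first.
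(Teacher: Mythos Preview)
Your proposal is correct and follows essentially the same approach as the paper's sketch (given in the paragraphs immediately preceding the theorem): reduce to the $i=0$ matrix via the zeroth-weight perturbation structure, compute the determinant ratio $\det N(k,m)/\det\hat N(k,m)$ using \eqref{eq:nabla2mgammajIS} and \cite[Lemma~2.5]{AE}, and read off the cutoff. Your explicit identification $M(x)=xN(k,m)+(1-x)E_{11}$ and the monotonicity argument for $p(j,\cdot,m)$ to handle the nested leading minors are exactly the details the paper leaves implicit under ``nested determinant test'' and its reference to \cite{AE}.
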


We now present some examples of this result for the case of the Bergman shift ($j=2$). \  Recall that the size of the relevant matrix is $k+1$ (in accordance with the size of the matrix to be tested for $k$--hyponormality). \ In this case, $p(j,k,m)=p(2,k,m)=\ddfrac{k(k+2)-m^2+1}{k(k+2)-m^2}$; observe that the numerator is 1 plus the denominator.
$$
\begin{array}{ccccccccc}
& &  &  & & m & &\\
& & | & 0 & 1 & 2 & 3 & 4 & 5 \\  \hline
& 1 & | & 4/3 & 3/2 & - & - & - & - \\
& 2 & | & 9/8 & 8/7 & 5/4 & - & - & - \\
k & 3 & | & 16/15 & 15/14 & 12/11 & 7/6 & - & - \\
& 4 & | & 25/24 & 24/23 & 21/20 & 16/15 & 9/8 & - \\
& 5 & | & 36/35 & 35/34 & 32/31 & 27/26 & 20/19 & 11/10 
\end{array}.
$$

As described in Theorem \ref{thm14}, each entry in this table corresponds with an exact cutoff for $n$--contractivity, where $n=k(k+2)-m^2$; that is, the perturbation is $n$--contractive if and only if $x \leq \frac{n+1}{n}$, so, for example, $(4,3)$--PD corresponds with the shift being $15$--contractive. \  In general there is not an exact correspondence with any $k$--hyponormal condition: for instance, the $(4,2)$--PD cutoff ($21/20$), falls between the cutoffs for $3$-- and $4$--hyponormality ($16/15$ and $25/24$, respectively). \  (As noted above, the cutoff for $k$--hyponormality is exactly that for $(k,0)$--PD.)

It is worth noting that the cutoffs for $m=1$ and $m=0$ are close (the difference amounts to $\ddfrac{1}{k(k+2)(k(k+2)-1)}$); recall that the latter is exactly the threshold for $k$--hyponormality, while, by Theorem \ref{thm114}(ii), the former is the cutoff for the standard matrix for $k$--hyponormality to be CPD. \  Observe also that the cutoff for $(k,2k)$--PD, which is the same as for $2k$--contractivity, is considerably bigger than that for the standard matrix for $k$--hyponormality to be CPD. \  

This shows in particular that, for that matrix to be non-negative as a quadratic form against all the ``negative binomial'' vectors of appropriate size (e.g., \linebreak $(0,1,-1, 0, 0, \ldots), (0,0,1,-3,3,-1,0, \ldots)$ is a weaker property than for it to be CPD. \ This is because if the operator is $2k$--contractive it is, being a contraction, $2k$--hypercontractive, and the quadratic form results just mentioned are all necessarily non-negative. \

It is therefore reasonable to raise the following question.

\begin{question}
Given integers $k\ge 1$ and $m$, with $0 \le m \le k$, what is the set of vectors upon which the original matrix of size $k+1$ is non-negative as a quadratic form if it is $(k,2m)$--PD? \ If $m=0$ it is the whole space; if $m=1$ it is the subspace of dimension $k$ consisting of those vectors whose components sum to zero; if $m=k$ it contains at least the ``negative binomial'' vectors.
\end{question}

\begin{remark}
\rm{
From the above, it is easy to show that the various conditions in $k$ and $m$ are distinct from one another (even just using $j=2$, the Bergman shift). \ It is also true in this setting that there is an easy expression in $x$ equivalent to the shift $A_j(x)$ $p$--contractive:  
$$
x \leq \frac{p+j-1}{p}
$$  
(cf. Theorem \ref{AdamsExner}). \ It results from this that the condition in Theorem \ref{AdamsExner}(i) is satisfied if and only if $A_j(x)$ is $((j+k+m)(k-m)+2m)$--contractive.  This allows us to use contractivity as a ``scale'' to measure and compare the strengths of the various positivities of matrices of certain sizes and certain order of differences.
}
\end{remark}

The following table for $A_2$ illustrates this (we suppress $j=2$ to ease the notation slightly).  Recall that if a weighted shift is $n$--hyponormal then it is $2n$--contractive, and that if we begin with $n$--hyponormality, the process terminates in a $1$ by $1$ matrix whose positivity is exactly this $2n$--th Agler expression.  Further, each of the intermediate conditions is at least no stronger than the previous one.  (In fact, the result above shows that at least in this case the previous condition is strictly stronger than the subsequent one obtained by introducing another $\nabla^{2}$ and decreasing the size of the matrix.) Note also that positive definiteness of an entry in one column is equivalent to conditional positive definiteness of the entry one column to the left. Recall finally that for a contraction, if all the even order contractivity tests are positive, the contraction is subnormal, so each of these ladders has, for contractions, subnormality at the top. 

\vspace*{-.5cm}
$$
\hspace*{-.5cm}
\begin{array}{c|c|c|c|c|c}
&n-\mbox{\rm hyponormal}&&&&\\
n&  (\gamma^{(x)})_{n+1} \geq 0 &  (\nabla^{2} \gamma^{(x)})_{n} \geq 0 & (\nabla^{4} \gamma^{(x)})_{n-1} \geq 0&(\nabla^{6} \gamma^{(x)})_{n-2} \geq 0&(\nabla^{8} \gamma^{(x)})_{n-3} \geq 0 \\ \hline
1& \mbox{3-C} & \mbox{2-C}& -& - & -\\
2& \mbox{8-C}& \mbox{7-C}& \mbox{4-C}& - & - \\
3& \mbox{15-C}& \mbox{14-C}& \mbox{11-C}& \mbox{6-C}& - \\
4& \mbox{24-C}& \mbox{23-C}& \mbox{20-C} & \mbox{15-C}& \mbox{8-C} \\
\end{array}
$$

Put informally, every regularity you see in this display is correct (for example, in any row the ``decreases'' in contractivity have gaps 1, 3, 5, 7, \ldots).  The corresponding results for $A_3$ and $A_4$ are similarly orderly.

We conclude this section with a brief digression, to report another fact about these zeroth weight perturbations $A_j(x)$ of the Agler shifts $A_j$.

\begin{proposition}
Let $A_j(x)$ be the zeroth weight perturbation of the Agler shift $A_j$ with weight sequence $\alpha^{(j)}$ as in Theorem \ref{thm14}. \  Then the weights-squared of $A_j(x)$ are $m$--alternating if and only if
\begin{equation}  \label{eq:naltAjofx}
x \leq 1 + \frac{(j-1) m!}{\prod_{i=1}^{m} (j+i)}.
\end{equation}
If we take $j=2$, that is, the case of the Bergman shift, the weights-squared \linebreak of $A_2(x)$ are $m$--alternating if and only if $A_2(x)$ is $\frac{(m+1)(m+2)}{2}$--contractive.
\end{proposition}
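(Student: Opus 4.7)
The plan is to reduce the $m$-alternating property for the weights-squared of $A_j(x)$ to a single inequality at $k=0$, and then evaluate this inequality explicitly using a classical difference formula.

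First I would record the weights-squared sequence $\beta^{(x)}$ of $A_j(x)$: we have $\beta_0^{(x)} = x/j$ and $\beta_n^{(x)} = (n+1)/(n+j)$ for $n \geq 1$. Writing $\beta^{(j)}_n = (n+1)/(n+j) = 1 - (j-1)/(n+j)$ for the unperturbed weights-squared, observe that $\beta^{(x)}$ differs from $\beta^{(j)}$ only in its zeroth entry. Consequently $(\nabla^m \beta^{(x)})_k = (\nabla^m \beta^{(j)})_k$ for all $k \geq 1$, so the condition of $m$-alternating reduces to verifying the single inequality $(\nabla^m \beta^{(x)})_0 \leq 0$ together with the fact that $\beta^{(j)}$ itself is $m$-alternating.

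Next I would compute $(\nabla^m \beta^{(j)})_k$ for all $k \geq 0$ using the standard identity
\begin{equation*}
\nabla^m \!\left[\tfrac{1}{n+j}\right]_{\!k} \;=\; \frac{m!}{\prod_{i=0}^{m}(k+j+i)},
\end{equation*}
which one proves by an easy induction (or via partial fractions). Since the constant $1$ is annihilated by $\nabla^m$ for $m \geq 1$, this gives
\begin{equation*}
(\nabla^m \beta^{(j)})_k \;=\; -\frac{(j-1)\,m!}{\prod_{i=0}^{m}(k+j+i)} \;\leq\; 0,
\end{equation*}
so $\beta^{(j)}$ is $m$-alternating for every $m\ge 1$, and in particular $(\nabla^m \beta^{(j)})_0 = -\frac{(j-1)\,m!}{j\prod_{i=1}^{m}(j+i)}$.

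Isolating the contribution of the perturbation, we have
\begin{equation*}
(\nabla^m \beta^{(x)})_0 \;=\; (\beta_0^{(x)} - \beta_0^{(j)}) + (\nabla^m \beta^{(j)})_0 \;=\; \frac{x-1}{j} - \frac{(j-1)\,m!}{j\prod_{i=1}^{m}(j+i)}.
\end{equation*}
Requiring this to be $\leq 0$ and multiplying by $j>0$ produces exactly the inequality \eqref{eq:naltAjofx1}. Conversely, under \eqref{eq:naltAjofx1} both this single quantity and all the $k\ge1$ instances are $\le 0$, giving the equivalence.

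For the Bergman case $j=2$, the bound simplifies: $\prod_{i=1}^{m}(2+i) = (m+2)!/2$, so
\begin{equation*}
1 + \frac{(j-1)\,m!}{\prod_{i=1}^{m}(j+i)} \;=\; 1 + \frac{2}{(m+1)(m+2)} \;=\; 1 + \frac{1}{n},\qquad n:=\tfrac{(m+1)(m+2)}{2}.
\end{equation*}
By Theorem \ref{AdamsExner}(i) with $j=2$, $A_2(x)$ is $n$-contractive if and only if $x \le (n+1)/n$, and matching the two thresholds yields the announced equivalence. The only step I expect to require care is verifying the difference-formula identity and bookkeeping the indices in the product $\prod_{i=0}^m(k+j+i)$ versus $\prod_{i=1}^m(j+i)$; everything else is routine algebra.
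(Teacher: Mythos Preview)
Your proposal is correct and follows essentially the same approach as the paper's proof: reduce to the $k=0$ test by noting the perturbation affects only the zeroth weight, compute $\nabla^{m}$ of the unperturbed weights-squared via the closed-form $\nabla^{m}\!\left[\frac{1}{n+j}\right]$ identity (the paper states the same formula, proved by induction), isolate the $(x-1)/j$ contribution, and for $j=2$ match the resulting threshold with the $n$--contractivity cutoff $(n+1)/n$ from Theorem~\ref{AdamsExner}(i). The only cosmetic difference is that the paper cites \cite[Remark 2.9]{BCE} for the complete alternating property of the unperturbed weights-squared, whereas you verify it directly from your difference formula.
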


\begin{proof}
As usual with zeroth weight perturbations, the only condition to check for $m$--alternating is that beginning with $\gamma_0(x)$, since we know that $A_j$ has weights-squared that are completely alternating (\cite[Remark 2.9]{BCE}). \  The relevant expression is
\begin{eqnarray*}
\frac{x}{j} - \binom{m}{1} \frac{2}{j+1} & + & \binom{m}{2} \frac{3}{j+2} - \ldots \pm \binom{m}{m} \frac{m+1}{m + j}  \\
&=\!\!&\frac{x}{j} - \frac{1}{j} \!+\! \left(\frac{1}{j} - \binom{m}{1} \frac{2}{j+1} +  \binom{m}{2} \frac{3}{j+2}- \ldots \pm \binom{m}{m} \frac{m+1}{m + j}\right) \\
&=& \frac{x}{j} - \frac{1}{j} + (\nabla^{m}\alpha^{(j)})(0).
\end{eqnarray*}

One may compute using an induction argument that
$$
\nabla^{m}(\alpha^{(j)})(n) = \frac{(1-j)m!}{\prod_{i=n}^{n+m}(j+i)},
$$
and this plus an easy calculation yields the result. \  The assertion for the Bergman shift is simply another calculation using \eqref{eq:naltAjofx}.  \qed
\end{proof}

One may check  that for $A_j(x)$ with $j > 2$ the cutoffs for $m$--alternating do not (necessarily) correspond to some $n$--contractivity;  the $n$--contractivity cutoff is $\frac{n+1}{n}$, and what comes out of the expression above need not correspond to an integer $n$. \  When it does, the correspondence is exact. \ In some sense, the cutoffs correspond approximately with $n$--contractivity where $n = \mathcal{O}(m^2)$, as is to be expected from the formula in \eqref{eq:naltAjofx}.


\bigskip

\noindent \textbf{Acknowledgments.} \ The authors are deeply grateful to the referee for a detailed reading of the paper, and for detecting an inconsistency in the use of the phrase ``infinite divisibility." \ The authors also wish to express their appreciation for support and warm hospitality during various visits (which materially aided this work) to Bucknell University, the University of Iowa, and the Universit\'{e} des Sciences et Technologies de Lille, and particularly the Mathematics Departments of these institutions. \ Several examples in this paper were obtained using calculations with the software tool \textit{Mathematica} \cite{Wol}.
 
\bigskip
\bigskip


\vspace{-1cm}

\end{document}